\newtheorem{thm}{Theorem}
\newtheorem{lem}[thm]{Lemma}
\newtheorem{prop}[thm]{Proposition}
\theoremstyle{definition}
\newcommand{\vertiii}[1]{{\left\vert\kern-0.25ex\left\vert\kern-0.25ex\left\vert
#1 \right\vert\kern-0.25ex\right\vert\kern-0.25ex\right\vert}}
\def \diag  {\text {\rm diag}}
\def \e   {\text {\rm e}}
\begin{document}

\title[]{The numerical radius and positivity of block matrices}

\author[Rajendra Bhatia]{Rajendra Bhatia}
\address{Ashoka University, Sonepat\\ Haryana, 131029, India}

\email{rajendra.bhatia@ashoka.edu.in}

\author{Tanvi Jain}

\address{Indian Statistical Institute, New Delhi 110016, India}

\email{tanvi@isid.ac.in}

\date{\today}

\begin{abstract}
This article has two interpenetrating motifs.
One is an exposition of some major ideas and techniques behind the use of block matrices,
and especially their positivity properties.
This is done by focussing on one major problem:
characterisation of operators whose numerical radius is bounded by one.
So, the article could serve as an introduction to that topic as well.
\end{abstract}

\subjclass[2010]{47A12, 47A20, 47B65}

\keywords{Numerical radius, block matrix, dilation, power inequality, positive operator}

\maketitle

\section{Introduction}

Block matrix techniques have come to play a powerful role in matrix analysis and operator theory. There are many
theorems that give a characterisation of a property of a matrix $A$ (such as its norm) in terms of another property (such as positivity) of a larger matrix in which $A$ sits as a block.
In the theory of the Schur complement,
and in the study of positive linear maps, currently a topic of great interest in quantum information, many important theorems are framed in terms of $2 \times 2$ block matrices. See, e.g., \cite{rbh1} and the survey article
\cite{zh1} in the book \cite{zh}.

The principal aim of this expository article is to show the essence of this technique in the context of a specific problem involving the numerical radius. We emphasize general ideas involving positivity of matrices and trigonometric polynomials
that can be used in a variety of problems.

Let $\mathcal{H}$ be the $d$-dimensional complex Euclidean space $\mathbb{C}^{d}$ (or, more generally, a complex Hilbert space).
We choose the convention that the inner product $\langle x, y\rangle$ on $\mathcal{H}$ is linear in the second variable $y$ and conjugate linear in $x.$
Let $A$ be an operator on $\mathcal{H}.$ By its {\it norm} we mean the operator norm
\begin{equation}
\|A\|=\sup _{\|x\|=1}\|A x\|=\sup _{\|x\|=\|y\|=1}|\langle x, A y\rangle|.\label{eq1}
\end{equation}
The {\it numerical radius} of $A$ is defined as
\begin{equation}
w(A)=\sup _{\|x\|=1}|\langle x, A x\rangle|.\label{eq2}
\end{equation}
The numerical radius is also a norm.
A comparison between this and the operator norm is given by the inequalities:
\begin{equation}
w(A) \le\|A\| \le 2 w(A).\label{eq3}
\end{equation}
The numerical radius is associated with the concept of the {\it numerical range}, (also called the {\it field of values}), a subset of the complex plane defined as
$$
W(A)=\{\langle x, A x\rangle:\|x\|=1\}.
$$
Much information about $A$ is contained in the set $W(A)$ and it is an object of intense study.
See, e.g., \cite{al,gr,hal,hj2,wg}.

The {\it adjoint} of $A$ is the unique operator $A^{*}$ that satisfies the relation
$$\langle x, A y\rangle=\left\langle A^{*} x, y\right\rangle\textrm{ for all }x, y\textrm{ in }\mathcal{H}.$$
If $A$ is {\it self-adjoint} $\left(A=A^{*}\right),$ then $W(A)$ is a subset of the real line, and $w(A)=\|A\|.$
We say that $A$ is {\it positive semidefinite} (just {\it positive}, for brevity) if $\langle x, A x\rangle \ge 0$ for all $x.$ See \cite{hj1} or \cite{rbh1} for basic facts about positive operators.

Having with us the two measures $w(A)$ and $\|A\|$ of the size of $A,$ we may ask the questions: when is $w(A) \leqslant 1$ and when is $\|A\| \leqslant 1$ ?
The two theorems given below answer these questions in terms of positivity of some block matrices constructed from $A.$

\begin{thm}\label{thm1}
The following conditions are equivalent:
\begin{itemize}
\item[(i)] $\|A\| \leqslant 1.$

\item[(ii)] The $2 \times 2$ block matrix $\left[\begin{array}{cc}I & A^{*} \\ A & I\end{array}\right]$ is positive.

\item[(iii)] The $(n+1) \times(n+1)$ block matrix
\begin{equation}
\begin{bmatrix}I & A^* & A^{*2} & \cdots & A^{*n}\\
A & I & A^* & \cdots & A^{*n-1}\\
\vdots & \vdots & \vdots & \vdots\vdots\vdots & \vdots\\
A^{n} & A^{n-1} & A^{n-2} & \cdots & I\end{bmatrix}\label{eq4}
\end{equation}
is positive for every $n.$
\end{itemize}
\end{thm}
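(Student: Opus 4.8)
The plan is to prove (i) $\Leftrightarrow$ (ii) by computing the quadratic form of the $2\times 2$ block matrix, to observe that (iii) $\Rightarrow$ (ii) is merely the case $n=1$, and to obtain the substantive implication (i) $\Rightarrow$ (iii) by exhibiting a non-negative operator-valued trigonometric series whose Fourier coefficients are exactly the blocks occurring in \eqref{eq4}.

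For (i) $\Leftrightarrow$ (ii): a short computation, using that the inner product is conjugate-linear in its first slot and that $\langle x,A^*y\rangle=\overline{\langle y,Ax\rangle}$, gives
\[
\left\langle\begin{bmatrix}x\\y\end{bmatrix},\ \begin{bmatrix}I&A^*\\A&I\end{bmatrix}\begin{bmatrix}x\\y\end{bmatrix}\right\rangle=\|x\|^2+\|y\|^2+2\operatorname{Re}\langle y,Ax\rangle
\]
for all $x,y\in\mathcal H$. Replacing $y$ by a suitable unimodular multiple of itself shows that this $2\times2$ matrix is positive precisely when $2|\langle y,Ax\rangle|\le\|x\|^2+\|y\|^2$ for all $x,y$; replacing $(x,y)$ by $(sx,s^{-1}y)$ and optimising over $s>0$, the latter is equivalent to $|\langle y,Ax\rangle|\le\|x\|\,\|y\|$ for all $x,y$, which by \eqref{eq1} says exactly that $\|A\|\le1$. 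Taking $n=1$ in \eqref{eq4} yields (iii) $\Rightarrow$ (ii).

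For (i) $\Rightarrow$ (iii), denote by $M_n$ the matrix in \eqref{eq4}; indexing its block rows and columns by $0,1,\dots,n$, its $(i,j)$ block depends only on $m=j-i$ and equals $A^{*m}$ for $m\ge0$ and $A^{-m}$ for $m<0$. One may assume $\|A\|<1$: the general case follows by applying the result to $rA$ with $0<r<1$ and letting $r\to1$, since the blocks of $M_n$ are polynomials in $A$ and the cone of positive matrices is closed. Assuming $\|A\|<1$, set
\[
C(\theta)=(I-e^{-i\theta}A)^{-1}=\sum_{k\ge0}e^{-ik\theta}A^k,\qquad F(\theta)=C(\theta)^*\,(I-A^*A)\,C(\theta),
\]
the series converging in operator norm. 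Then $F(\theta)\ge0$ for every $\theta$, since $\langle v,F(\theta)v\rangle=\|C(\theta)v\|^2-\|AC(\theta)v\|^2\ge0$. Multiplying the two geometric series together and telescoping — the constant term is $\sum_{k\ge0}A^{*k}(I-A^*A)A^k=I$, and the coefficient of $e^{im\theta}$, $m\ge1$, is $\sum_{k\ge0}A^{*(k+m)}(I-A^*A)A^k=A^{*m}$ — one gets
\[
F(\theta)=I+\sum_{k\ge1}\bigl(e^{ik\theta}A^{*k}+e^{-ik\theta}A^k\bigr),
\]
so its $m$-th Fourier coefficient $\frac1{2\pi}\int_0^{2\pi}e^{-im\theta}F(\theta)\,d\theta$ is exactly the $(i,j)$ block of $M_n$ with $j-i=m$. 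Hence, for any $x_0,\dots,x_n\in\mathcal H$ and $g(\theta):=\sum_{j=0}^n e^{-ij\theta}x_j$,
\[
\sum_{i,j=0}^n\bigl\langle x_i,\,(M_n)_{ij}\,x_j\bigr\rangle=\frac1{2\pi}\int_0^{2\pi}\bigl\langle g(\theta),F(\theta)g(\theta)\bigr\rangle\,d\theta\ \ge\ 0,
\]
which is the positivity of $M_n$.

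The main obstacle is this last implication: one must recognise that the formal symbol $I+\sum_{k\ge1}(e^{ik\theta}A^{*k}+e^{-ik\theta}A^k)$ admits the factorisation $C(\theta)^*(I-A^*A)C(\theta)$, which both exhibits its non-negativity and is the sole point where the hypothesis $\|A\|\le1$ is used. This is the operator version of the elementary identity that, for a scalar $a$ with $|a|<1$, the analogous series equals $(1-|a|^2)/|1-ae^{-i\theta}|^2$, a positive multiple of the Poisson kernel; everything else — identifying Fourier coefficients, interchanging the finite sum with the integral, and the limit $r\to1$ — is routine. An alternative route to (i) $\Rightarrow$ (iii) is Sz.-Nagy's dilation theorem: a contraction $A$ satisfies $A^k=P U^k|_{\mathcal H}$ for some unitary $U$ on a space $\mathcal K\supseteq\mathcal H$ and all $k\ge0$, whence $M_n=\bigl[(U^{-i}J)^*(U^{-j}J)\bigr]_{i,j}$ with $J:\mathcal H\hookrightarrow\mathcal K$ the inclusion, a Gram matrix and therefore positive; I would still favour the trigonometric argument above, being self-contained and in the spirit of the article.
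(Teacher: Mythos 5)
Your proof is correct, but it takes a genuinely different route from the paper's. For (i) $\Leftrightarrow$ (ii) the paper does not compute the quadratic form directly; instead it proves a small identity (its Lemma \ref{lem4}): if $I-A$ is invertible then $(I-A)^{-1}+(I-A^{*})^{-1}-I=(I-A)^{-1}(I-AA^{*})(I-A^{*})^{-1}$, so $\|A\|\le 1$ iff this sum is positive. Applying this to the nilpotent block shift $R_{n}(A)$ of \eqref{eq12}, whose resolvent $(\underline{I}-R_{n}(A))^{-1}=\underline{I}+R_{n}(A)+\cdots+R_{n}(A)^{n}$ is the lower-triangular block matrix of powers of $A$, gives \eqref{eq4} at one stroke (the case $n=1$ is (ii)). This is purely algebraic and finite: no infinite series, no approximation $\|A\|<1$, no integration, and it works verbatim on infinite-dimensional spaces. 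Your argument for (i) $\Rightarrow$ (iii) is instead the operator Poisson-kernel/Herglotz route: you factor the symbol as $F(\theta)=(I-e^{-i\theta}A)^{-*}(I-A^{*}A)(I-e^{-i\theta}A)^{-1}$, identify its Fourier coefficients with the blocks of \eqref{eq4}, and integrate the nonnegative quadratic form; this is essentially the method the paper reserves for Theorem \ref{thm2} (its Lemma \ref{lem6}), here with the extra cost of the geometric-series convergence and the $r\to 1$ limit, but with the benefit of unifying Theorems \ref{thm1} and \ref{thm2} under one scheme (positivity of an operator-valued symbol implies positivity of its block Toeplitz matrices). It is worth noticing that the two approaches are close relatives: the paper's computation with $R_{n}(A)$ is exactly your factorisation $C^{*}(I-A^{*}A)C$ carried out for a nilpotent operator, where the geometric series terminates and all analytic issues disappear. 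Minor quibble: the blocks of $M_{n}$ are polynomials in $A$ and $A^{*}$, not in $A$ alone, though your limiting argument is of course unaffected.
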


\begin{thm}\label{thm2}
The following conditions are equivalent:
\begin{itemize}
\item[(i)] $w(A) \leq 1.$

\item[(ii)] The $(n+1) \times(n+1)$ block matrix
\begin{equation}
\Delta_{n}(A)=\begin{bmatrix}2I & A^{*} & 0 & \cdots & 0 \\ A & 2I & A^{*} & \cdots & 0 \\ 0 & A & 2I & \cdots & 0 \\ \cdot & \cdot & \cdot & \cdots & \cdot \\ 0 & 0 & 0 & \cdots & 2I\end{bmatrix}\label{eq5}
\end{equation}
is positive for every $n.$

\item[(iii)] The $(n+1) \times(n+1)$ block matrix
\begin{equation}
\Gamma_n(A)=\begin{bmatrix}2I & A^* & A^{*2} & \cdots & A^{*n}\\
A & 2I & A^* & \cdots & A^{*n-1}\\
\vdots & \vdots & \vdots & \vdots\vdots\vdots & \vdots\\
A^{n} & A^{n-1} & A^{n-2} & \cdots & 2I\end{bmatrix}\label{eq6}
\end{equation}
is positive for every $n.$ 
\end{itemize}
\end{thm}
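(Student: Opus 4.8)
The plan is to route both equivalences through the elementary reformulation
\[
w(A)\le 1 \iff 2I + e^{i\theta}A + e^{-i\theta}A^{*}\ge 0 \text{ for every } \theta\in\mathbb{R},
\]
which is immediate from \eqref{eq2}: since $\langle x,A^{*}x\rangle=\overline{\langle x,Ax\rangle}$, the right-hand side says $2\|x\|^{2}+2\,\mathrm{Re}\big(e^{i\theta}\langle x,Ax\rangle\big)\ge0$ for all $x$ and all $\theta$, and taking the worst $\theta$ this is exactly $|\langle x,Ax\rangle|\le\|x\|^{2}$. Write $T(\theta)=2I+e^{i\theta}A+e^{-i\theta}A^{*}$; because $\theta$ ranges over all of $\mathbb{R}$, the condition $T(\theta)\ge0$ for all $\theta$ is the same as $2I-e^{i\theta}A-e^{-i\theta}A^{*}\ge0$ for all $\theta$. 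Everything past this point is an instance of the principle that a (block) Toeplitz matrix is positive exactly when its operator-valued symbol is, the two directions being obtained by integration and by Fej\'er (Ces\`aro) means respectively.

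For (i)\,$\Leftrightarrow$\,(ii): the blocks $2I,A,A^{*},0$ of $\Delta_{n}(A)$ are precisely the Fourier coefficients of $T$, so writing $\mathbf{x}=(x_{0},\dots,x_{n})$ and $X(\theta)=\sum_{j}e^{ij\theta}x_{j}$ one gets
\[
\big\langle\mathbf{x},\Delta_{n}(A)\mathbf{x}\big\rangle=\frac{1}{2\pi}\int_{0}^{2\pi}\big\langle X(\theta),T(\theta)X(\theta)\big\rangle\,d\theta .
\]
This gives (i)\,$\Rightarrow$\,(ii) since the integrand is pointwise nonnegative. Conversely, evaluating $\Delta_{n}(A)\ge0$ on $x_{k}=z^{k}u$ with $|z|=1$ produces $2(n+1)\|u\|^{2}+2n\,\mathrm{Re}\big(\bar z\langle u,Au\rangle\big)\ge0$; choosing $z$ with $\bar z\langle u,Au\rangle=-|\langle u,Au\rangle|$ and letting $n\to\infty$ gives $|\langle u,Au\rangle|\le\|u\|^{2}$, which is (i).

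For (i)\,$\Leftrightarrow$\,(iii): the blocks of $\Gamma_{n}(A)$ are the Fourier coefficients of
\[
\sigma(\theta)=2I+\sum_{m\ge1}\big(e^{im\theta}A^{m}+e^{-im\theta}A^{*m}\big)=(I-e^{i\theta}A)^{-1}+\big((I-e^{i\theta}A)^{-1}\big)^{*},
\]
and the identity $(I-B)^{-1}+(I-B^{*})^{-1}=(I-B^{*})^{-1}(2I-B-B^{*})(I-B)^{-1}$ with $B=e^{i\theta}A$ shows that $\sigma(\theta)\ge0$ for all $\theta$ if and only if $2I-e^{i\theta}A-e^{-i\theta}A^{*}\ge0$ for all $\theta$, i.e.\ if and only if $w(A)\le1$. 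Granting that $\sigma$ really is the symbol of $\Gamma_{n}(A)$, the implication (i)\,$\Rightarrow$\,(iii) is once more $\big\langle\mathbf{x},\Gamma_{n}(A)\mathbf{x}\big\rangle=\frac{1}{2\pi}\int\langle X(\theta),\sigma(\theta)X(\theta)\rangle\,d\theta\ge0$, while (iii)\,$\Rightarrow$\,(i) follows by evaluating $\Gamma_{n}(A)\ge0$ on $x_{k}=z^{k}u$ and noticing that the resulting nonnegative numbers are, up to a positive factor, the $n$-th Ces\`aro means of the Fourier series $\langle u,\sigma(\theta)u\rangle$; hence that series has nonnegative sum for every $\theta$ and $u$, giving $\sigma(\theta)\ge0$ and therefore (i).

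The one genuine obstacle is convergence: the series for $\sigma$, and its identification with the Toeplitz symbol of $\Gamma_{n}(A)$, need $I-e^{i\theta}A$ to be invertible, i.e.\ the spectral radius $r(A)<1$. I would remove this by scaling. If $r(A)<1$, all of the above is literally valid. In general, apply the case $r(\cdot)<1$ to $A_{\varepsilon}=(1-\varepsilon)A$, which has $r(A_{\varepsilon})<1$ and $w(A_{\varepsilon})\le(1-\varepsilon)w(A)$: for (i)\,$\Rightarrow$\,(iii) let $\varepsilon\to0$, using that $B\mapsto\Gamma_{n}(B)$ is continuous and the positive cone is closed; for (iii)\,$\Rightarrow$\,(i) first observe that $\Gamma_{n}(A_{\varepsilon})$ is the Hadamard (Schur) product $\Gamma_{n}(A)\circ\big((1-\varepsilon)^{|j-k|}\big)_{j,k}$ with the positive definite Poisson-kernel matrix, so $\Gamma_{n}(A)\ge0$ forces $\Gamma_{n}(A_{\varepsilon})\ge0$, and again let $\varepsilon\to0$. (For (iii)\,$\Rightarrow$\,(i) one also sees at once that $\Gamma_{n}(A)\ge0$ forces $\|A^{m}\|\le2$ for all $m$, via the $2\times2$ block on rows and columns $\{0,m\}$, hence $r(A)\le1$; so scaling is needed only in the boundary case $r(A)=1$.) A dilation-theoretic alternative for (i)\,$\Rightarrow$\,(iii) is to use a unitary $2$-dilation of $A$, under which every block of $\Gamma_{n}(A)$ equals $2P_{\mathcal{H}}U^{j-k}|_{\mathcal{H}}$, so that $\Gamma_{n}(A)$ is $2$ times a Gram matrix.
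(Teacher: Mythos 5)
Your argument is correct, and it splits into a part that mirrors the paper and a part that is genuinely different. For (i)$\Leftrightarrow$(ii) you are essentially on the paper's route: your identity $\langle \mathbf{x},\Delta_n(A)\mathbf{x}\rangle=\frac{1}{2\pi}\int\langle X(\theta),T(\theta)X(\theta)\rangle\,d\theta$ is exactly the operator Herglotz argument (Lemma~\ref{lem6} applied to the symbol of Lemma~\ref{lem7}), and your test vectors $x_k=z^k u$ compress into one computation what the paper does by conjugating $\Delta_n(A)$ with $\diag\left(\e^{\imath t}I,\ldots,\e^{(n+1)\imath t}I\right)$ and then using the equal-copies vector of Lemma~\ref{lem8}; the factor $\frac{n+1}{n}$ and the limit $n\to\infty$ are the same. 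Where you diverge is condition (iii). The paper never connects $\Gamma_n(A)$ to (i) directly: it proves (ii)$\Leftrightarrow$(iii) purely algebraically, applying the fact that $\operatorname{Re} X\ge 0$ iff $\operatorname{Re} X^{-1}\ge 0$ (Lemma~\ref{lem5}) to $X=\underline{I}-R_n(A)$ with $R_n(A)$ the nilpotent block shift \eqref{eq12}, so that $2\operatorname{Re} X=\Delta_n(-A)$, $2\operatorname{Re} X^{-1}=\Gamma_n(A)$ is a finite sum, and no convergence or spectral-radius question ever arises. You apply the same resolvent identity to $\e^{\imath\theta}A$ itself, reading $\Gamma_n(A)$ as the block Toeplitz matrix of the symbol $(I-\e^{\imath\theta}A)^{-1}+\bigl((I-\e^{\imath\theta}A)^{-1}\bigr)^{*}$, which forces $r(A)<1$ and hence your scaling apparatus: continuity of $\Gamma_n$ and closedness of the positive cone in one direction, and in the other the bound $\|A^m\|\le 2$ from the $\{0,m\}$ principal block, the block Schur product with the positive matrix $\bigl((1-\varepsilon)^{|j-k|}\bigr)$, and Fej\'er means in place of a clean algebraic step. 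All of these steps are sound (the block Schur product of a positive block matrix with a positive scalar matrix is positive, and $\frac{1}{n+1}\langle\mathbf{x},\Gamma_n(A)\mathbf{x}\rangle$ on your test vectors is precisely the $n$-th Fej\'er mean of $\langle u,\sigma(\theta)u\rangle$), so the proof is complete; what the paper's nilpotent trick buys is a shorter (ii)$\Leftrightarrow$(iii) free of any approximation, while your route buys a direct (i)$\Leftrightarrow$(iii) and an explicit positive symbol for $\Gamma_n(A)$. One caveat on your closing aside: within this paper's development the unitary $2$-dilation is itself derived from the positivity of \eqref{eq6} in Section~3, so using it to prove (i)$\Rightarrow$(iii) here would be circular, though it is fine as an external remark.
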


The matrices \eqref{eq4} \eqref{eq5} and \eqref{eq6} have a very special form. A matrix $T$ is called a
{\it Toeplitz matrix} if its entries $t_{i j}$ obey the relation $t_{i j}=t_{i-j}.$
This says that on each of the diagonals of $T$ parallel to the main diagonal the same entry is repeated.
Each of the matrices \eqref{eq4}, \eqref{eq5} and \eqref{eq6} is Hermitian and {\it block-Toeplitz}.
Further the matrix in \eqref{eq5} is {\it block-tridiagonal}; all its block entries outside the three middle diagonals are zero.

In Section 2 we give proofs of
Theorems \ref{thm1} and \ref{thm2} using elementary ideas around positivity of matrices and Fourier series.
In Section 3 we show how Theorem \ref{thm2} can be used to prove one of the fundamental properties - the power inequality - of the numerical radius, and recall its role in the development of the subject.
We also indicate how the positivity of the block matrices \eqref{eq4} and \eqref{eq6} leads to two famous {\it dilation theorems}
of major interest and importance.
We take this opportunity to gather and prove several other useful
conditions equivalent to those in Theorem \ref{thm2}.
These are given in the following theorem, due to T. Ando.

\begin{thm}\label{thm3}
The following conditions are equivalent:
\begin{itemize}
\item[(i)] $w(A) \leqslant 1.$

\item[(ii)] There exist operators $X$ and $Y$ such that 
\begin{equation}
X^{*} X+Y^{*} Y=I\textrm{ and }A=2 X^{*} Y.\label{eq7}
\end{equation}

\item[(iii)] There exists an operator $C$ such that
\begin{equation}
\|C\| \leqslant 1 \text { and } A=2\left(I-C^{*} C\right)^{1 / 2} C.\label{eq8}
\end{equation}
\item[(iv)] There exists a Hermitian operator $H$ such that the $2 \times 2$ block matrix
\begin{equation}
\left[\begin{array}{cc}
I+H & A^{*} \\
A & I-H
\end{array}\right]\label{eq9}
\end{equation}
is positive.
\end{itemize}
\end{thm}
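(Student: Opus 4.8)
The conditions \eqref{eq7}, \eqref{eq8} and \eqref{eq9} are tied together by elementary algebra, and the only implication carrying real content is the passage from the bare inequality $w(A)\le1$ to an actual factorisation; so the plan is to prove that (i) implies both (ii) and (iii), and then to close the loop with the routine implications (iii)$\Rightarrow$(ii)$\Rightarrow$(i) and (ii)$\Leftrightarrow$(iv). Here (iii)$\Rightarrow$(ii) is immediate on taking $X=(I-C^{*}C)^{1/2}$ and $Y=C$. For (ii)$\Rightarrow$(i), given a unit vector $z$ write $\langle z,Az\rangle=2\langle Xz,Yz\rangle$ and combine Cauchy--Schwarz with the arithmetic--geometric mean inequality:
\[
|\langle z,Az\rangle|\le 2\,\|Xz\|\,\|Yz\|\le\|Xz\|^{2}+\|Yz\|^{2}=\langle z,(X^{*}X+Y^{*}Y)z\rangle=1 .
\]
For (ii)$\Rightarrow$(iv), put $H=Y^{*}Y-X^{*}X$; then $I+H=2Y^{*}Y$ and $I-H=2X^{*}X$, so the matrix \eqref{eq9} equals $2\left[\begin{smallmatrix}Y^{*}\\ X^{*}\end{smallmatrix}\right]\left[\begin{smallmatrix}Y & X\end{smallmatrix}\right]\ge0$. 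Conversely, if \eqref{eq9} is positive, factor it as $\left[\begin{smallmatrix}Y_{0} & X_{0}\end{smallmatrix}\right]^{*}\left[\begin{smallmatrix}Y_{0} & X_{0}\end{smallmatrix}\right]$, read off $X_{0}^{*}Y_{0}=A$ and $X_{0}^{*}X_{0}+Y_{0}^{*}Y_{0}=2I$, and rescale by $1/\sqrt2$ to get \eqref{eq7}. (The direct proof of (iv)$\Rightarrow$(i) is also worth recording: inserting the vector $(\bar\lambda x,\,x)$ with $|\lambda|=1$, $\|x\|=1$, into \eqref{eq9} makes the $H$-contributions cancel and leaves $2-2\operatorname{Re}\bigl(\bar\lambda\langle x,Ax\rangle\bigr)\ge0$; choosing the phase of $\lambda$ gives $|\langle x,Ax\rangle|\le1$.)

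For the substantive step I would first observe that $w(A)\le1$ is equivalent to the positivity, for every real $\theta$, of the operator-valued trigonometric polynomial $p(\theta)=2I-\e^{i\theta}A-\e^{-i\theta}A^{*}$, since $\langle x,p(\theta)x\rangle=2\|x\|^{2}-2\operatorname{Re}\bigl(\e^{i\theta}\langle x,Ax\rangle\bigr)$. This $p$ has degree one, and by the positivity-of-trigonometric-polynomials machinery of Section~2 --- equivalently, by the positivity of all the block Toeplitz matrices $\Delta_{n}(A)$ of Theorem~\ref{thm2} --- it admits an operator Fej\'er--Riesz factorisation $p(\theta)=(F_{0}+\e^{i\theta}F_{1})^{*}(F_{0}+\e^{i\theta}F_{1})$. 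Comparing the constant term and the coefficient of $\e^{i\theta}$ gives $F_{0}^{*}F_{0}+F_{1}^{*}F_{1}=2I$ and $F_{0}^{*}F_{1}=-A$, so $X=F_{0}/\sqrt2$ and $Y=-F_{1}/\sqrt2$ satisfy \eqref{eq7}: this is (i)$\Rightarrow$(ii). Taking instead the outer (spectral) factor, so that $F_{0}$ is invertible when $w(A)<1$, and setting $P=F_{0}^{*}F_{0}$, $D=F_{0}^{-1}F_{1}$, one gets $\tfrac12P+D^{*}\bigl(\tfrac12P\bigr)D=I$ and $A=-PD$; then $C=-(P/2)^{1/2}D$ is a strict contraction with $2(I-C^{*}C)^{1/2}C=A$, which is \eqref{eq8}. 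The remaining boundary case $w(A)=1$ is handled by applying this to $rA$ with $r\uparrow1$ and passing to a limit, using compactness of the operator unit ball and continuity of $C\mapsto 2(I-C^{*}C)^{1/2}C$.

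The single genuine obstacle is the operator Fej\'er--Riesz factorisation of the degree-one polynomial $p$ --- equivalently, the existence of a Hermitian $S$ with $0\le S\le 2I$ and $\left[\begin{smallmatrix}S & -A\\ -A^{*} & 2I-S\end{smallmatrix}\right]\ge0$, which after a diagonal unitary conjugation and a swap of the two blocks is precisely \eqref{eq9} with $H=I-S$. Thus the heart of the theorem is exactly the step (i)$\Rightarrow$(iv): it is the one place where one must actually manufacture a positive block matrix out of the inequality $w(A)\le1$, and for this I would lean on the Fourier-analytic positivity arguments developed in Section~2 (alternatively, it can be extracted from the dilation theorems discussed in Section~3). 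Everything else above is bookkeeping with the resulting factors.
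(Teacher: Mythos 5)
Your overall architecture reproduces the paper's at the decisive point: the only substantive step is handled exactly as in the paper, by factoring the degree-one operator trigonometric polynomial $2I+\e^{\imath t}A+\e^{-\imath t}A^{*}$ (your $p(\theta)$, up to replacing $A$ by $-A$) as $P(\e^{\imath t})^{*}P(\e^{\imath t})$ with $P(z)=X+Yz$ and reading off \eqref{eq7}; the algebra connecting (ii), (iii), (iv) and the easy converses is the same bookkeeping the paper does. Two of your peripheral routes genuinely differ: your direct (iv)$\Rightarrow$(i) via the test vectors $(\pm\bar\lambda x,\,x)$ is a short and correct alternative to the paper's argument, which instead shows that (iv) forces positivity of every $\Delta_{n}(A)$ by splitting it into $2\times2$ positive summands (more in the spirit of its block-matrix theme, and it lands on Theorem \ref{thm2}(ii) rather than on (i) directly); and your (i)$\Rightarrow$(iii), via the outer Fej\'er--Riesz factor when $w(A)<1$ followed by a limit over $r\uparrow1$, replaces the paper's simpler derivation of (iii) from (ii) by polar decomposition $X=U(I-Y^{*}Y)^{1/2}$, $C=U^{*}Y$ (both arguments being confined to finite dimensions).

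The one place where your write-up is wrong as stated is the provenance of the factorisation. You claim it is delivered ``by the positivity-of-trigonometric-polynomials machinery of Section 2, equivalently by the positivity of all the $\Delta_{n}(A)$.'' It is not: Section 2 contains only the easy Herglotz direction (Lemma \ref{lem6}: pointwise positivity of an operator Fourier series implies positivity of its block Toeplitz matrices), and nothing there constructs a factor. The tool you need is Rosenblum's operator Fej\'er--Riesz theorem, which the paper must import as Theorem \ref{thm9} precisely because it is strictly stronger than anything proved in Section 2; indeed, as you yourself observe, existence of the degree-one factorisation is \emph{equivalent} to condition (iv), so it cannot be a formal consequence of positivity of the $\Delta_{n}$'s without genuine further work. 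Your (i)$\Rightarrow$(iii) additionally uses the refinement that the outer factor has invertible constant term when $p\geq\delta I>0$, which goes beyond the statement of Theorem \ref{thm9} and should either be cited separately or avoided by deriving (iii) from (ii) as the paper does. With the factorisation correctly attributed (cited as a black box, exactly as the paper cites Rosenblum), the rest of your argument is sound; two minor quibbles: your (iv)$\Rightarrow$(ii) square-root factorisation produces $X_{0},Y_{0}$ mapping $\mathcal{H}$ into $\mathcal{H}\oplus\mathcal{H}$ rather than operators on $\mathcal{H}$ (harmless, since your direct (iv)$\Rightarrow$(i) already closes the logical loop), and the test-vector computation has a sign slip (the vector $(\bar\lambda x,\,x)$ gives $2+2\operatorname{Re}\bigl(\bar\lambda\langle x,Ax\rangle\bigr)\geq0$), which is immaterial because $\lambda$ runs over the whole unit circle.
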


While the unifying theme is positivity and block matrices, the proof of
Theorem \ref{thm3} invokes a little more advanced tool than our proofs of Theorems \ref{thm1} and \ref{thm2}.
This is discussed in Section 4.

A standard reference for positivity, and its various ramifications in operator algebras, is Paulsen \cite{paulsen}. The recent book \cite{wg} by Wu and Gau provides an encyclopedic coverage of the numerical range and numerical radius. A lot more beyond what we discuss here can be found in these two books.
Except for some details and points of emphasis, all major ideas in our proofs of Theorems \ref{thm1} and \ref{thm2} are taken from \cite{paulsen} Chapters 1-3.
Our proof of Theorem \ref{thm3} is different from ones we have seen. See the remarks after the proof of the Theorem.

\section{Proofs of Theorems \ref{thm1} and \ref{thm2}}

If $A$ and $B$ are self-adjoint, the notation $A \geqslant B$ means $A-B$ is positive. In particular, $A \geqslant 0$ means $A$ is positive.
From the definition of positivity, it follows that if $A \geqslant 0,$ and $X$ is any operator, then $X^{*} A X \geqslant 0.$ This fact will be used very often.

The condition $\|A\| \leqslant 1$ means $\|A x\|^{2} \leqslant\|x\|^{2}$ for every $x \in \mathcal{H}.$
Writing this as $\langle A x, A x\rangle=\left\langle x, A^{*} A x\right\rangle \leqslant\langle x, x\rangle,$ one sees that $\|A\| \leq 1$ if and only if $I-A^{*} A \geqslant 0.$ Since $\|A\|=\left\|A^{*}\right\|,$ this is equivalent to $I-A A^{*} \geqslant 0.$
A little less obvious is the assertion of the following lemma.

\begin{lem}\label{lem4}
Let $I-A$ be invertible (i.e. the spectrum of $A$ does not contain the point $1$).
Then 
\begin{equation}
\|A\| \leqslant 1 \Leftrightarrow(I-A)^{-1}+\left(I-A^{*}\right)^{-1}-I \geqslant 0.\label{eq10}
\end{equation}
\end{lem}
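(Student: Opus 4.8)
The plan is to exploit the Cayley-type transform that links the condition $\|A\|\le 1$ to a positivity statement about the resolvent. First I would observe that since $I-A$ is invertible, I can write $B=(I-A)^{-1}$, and note that the quantity $(I-A)^{-1}+(I-A^*)^{-1}-I$ is precisely $B+B^*-B^*B^{-*}B^{-1}\cdot(\text{something})$; more cleanly, I would compute $B^*\bigl[(I-A)^{-1}+(I-A^*)^{-1}-I\bigr]B$ where $B=(I-A)^{-1}$. The point of conjugating by $B$ (which preserves positivity since $B$ is invertible) is that it clears the inverses: $B^*(I-A)^{-1}B = B^*B = (I-A^*)^{-1}(I-A)^{-1}$, $B^*(I-A^*)^{-1}B=(I-A^*)^{-1}(I-A^*)^{-1}(I-A)^{-1}$... so it is better to conjugate by $B^{-1}=I-A$ on one side only, or to just expand directly.

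The cleaner route: multiply the middle expression on the left by $(I-A^*)$ and on the right by $(I-A)$, both invertible, so positivity is preserved in both directions. This gives
\begin{equation}
(I-A^*)\bigl[(I-A)^{-1}+(I-A^*)^{-1}-I\bigr](I-A).\label{eqplan}
\end{equation}
Now $(I-A^*)(I-A)^{-1}(I-A) = I-A^*$, and $(I-A^*)(I-A^*)^{-1}(I-A)=I-A$, and $(I-A^*)I(I-A)=(I-A^*)(I-A)=I-A-A^*+A^*A$. Adding, the expression in \eqref{eqplan} equals $(I-A^*)+(I-A)-(I-A-A^*+A^*A)=I-A^*A$. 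So the displayed resolvent expression is positive if and only if $I-A^*A\ge 0$, which as noted just before the lemma is equivalent to $\|A\|\le 1$. That is the whole proof.

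The key steps in order: (1) note $I-A$ and $I-A^*$ are invertible and congruence by an invertible operator preserves positivity in both directions; (2) conjugate the right-hand expression of \eqref{eq10} by $I-A$ (equivalently, sandwich between $(I-A^*)$ and $(I-A)$); (3) expand and simplify the four resulting terms to land exactly on $I-A^*A$; (4) invoke the already-established equivalence $\|A\|\le 1 \Leftrightarrow I-A^*A\ge 0$. I do not anticipate a genuine obstacle here — the only thing to be careful about is the bookkeeping in step (3), making sure the cross terms cancel correctly and that one does not accidentally need $A$ and $A^*$ to commute (they don't, and the computation doesn't require it). The invertibility hypothesis is used only to justify the congruence; without it one would need a limiting argument, which is why it appears in the statement.
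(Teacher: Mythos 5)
Your proof is correct and is essentially the paper's own argument run in the opposite direction: the paper factors $(I-A)^{-1}+(I-A^*)^{-1}-I=(I-A)^{-1}\left[I-AA^*\right](I-A^*)^{-1}$, while you conjugate by $I-A$ to get $(I-A^*)\left[(I-A)^{-1}+(I-A^*)^{-1}-I\right](I-A)=I-A^*A$; both are the same congruence trick, landing on $I-AA^*\geqslant 0$ versus $I-A^*A\geqslant 0$, which are equally equivalent to $\|A\|\leqslant 1$. No gap.
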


\begin{proof}
We observe that 
$$
\begin{aligned}
&(I-A)^{-1}+\left(I-A^{*}\right)^{-1}-I \\
&=(I-A)^{-1}\left[\left(I-A^{*}\right)+(I-A)-(I-A)\left(I-A^{*}\right)\right]\left(I-A^{*}\right)^{-1} \\
&=(I-A)^{-1}\left[I-A A^{*}\right]\left(I-A^{*}\right)^{-1},
\end{aligned}
$$
and this is positive if and only if $I-A A^{*} \geqslant 0,$ or equivalently, $\|A\| \leqslant 1.$
\end{proof}

Consider now the $2 \times 2$ block matrix
\begin{equation}
R_{1}(A)=\left[\begin{array}{ll}
0 & 0 \\
A & 0
\end{array}\right].\label{eq11}
\end{equation}
Let $\underline{I}$ denote the identity operator $\left[\begin{array}{ll}I & 0 \\ 0 & I\end{array}\right].$ Then $\underline{I}-R_{1}(A)$ is invertible.
Further
$$
\left[\begin{array}{cc}
I & 0 \\
-A & I
\end{array}\right]^{-1}=\left[\begin{array}{ll}
I & 0 \\
A & I
\end{array}\right].
$$
So, by Lemma \ref{lem4}, $\left\|R_{1}(A)\right\| \leqslant 1$ if and only if 
$$\left[\begin{array}{ll}I & 0 \\ A & I\end{array}\right]+\left[\begin{array}{cc}I & A^{*} \\ 0 & I\end{array}\right]-\left[\begin{array}{cc}I & 0 \\ 0 & I\end{array}\right] \geqslant 0,$$
i.e.,
$$\left[\begin{array}{cc}I & A^{*} \\ A & I\end{array}\right] \geqslant 0.$$
But $\left\|R_{1}(A)\right\|=\|A\|.$ This shows that the conditions (i) and (ii) of Theorem \ref{thm1} are equivalent.

The ideas of this proof can be generalised. Let $R_{n}(A)$ be the $(n+1) \times(n+1)$
block matrix
\begin{equation}
R_{n}(A)=\begin{bmatrix}
0 & 0 & \cdots & 0 & 0\\
A & 0 & \cdots & 0 &0\\
0 & A & \cdots & 0 &0\\
0 & 0 & \cdots & A & 0
\end{bmatrix}\label{eq12}
\end{equation}
with entries $A$ on its first subdiagonal and $0$'s elsewhere.
Then for $1 \leqslant k \leqslant n$ $R_{n}(A)^{k}$ is the block matrix with entries $A$ on the $k\textrm{th}$ subdiagonal and $0$'s elsewhere, and $R_{n}(A)^{n+1}=0.$
So,
$$
\begin{aligned}
\left(\underline{I}-R_{n}(A)\right)^{-1} &=\underline{I}+R_{n}(A)+\cdots+R_{n}(A)^{n} \\
& =\begin{bmatrix}I & 0 & 0 & \cdots & 0 \\
A & I & 0 &\cdots & 0 \\
A^{2} & A & I & \cdots & 0 \\
\vdots & \vdots & \vdots & \vdots\vdots\vdots & \vdots\\
A^{n} & A^{n-1} & A^{n-2} & \cdots& I\end{bmatrix} 
\end{aligned}
$$
Also $\left\|R_{n}(A)\right\|=\|A\|.$ So, by Lemma \ref{lem4},
$\|A\| \leqslant 1$ if and only if
$$
\left(\underline{I}-R_{n}(A)\right)^{-1}+\left(\underline{I}-R_{n}\left(A\right)^{*}\right)^{-1}-\underline{I} \geqq 0.
$$
This is the condition (iii) of Theorem \ref{thm1}. We have thus proved Theorem 1 completely.

We now turn to the proof of Theorem \ref{thm2}.
The equivalence (ii) $\Leftrightarrow$ (iii) is established using ideas very similar to ones used above.

The {\it real part} of an operator $A$ is the Hermitian operator $\operatorname{Re}\, A$ defined as
\begin{equation}
\operatorname{Re} A=\frac{1}{2}\left(A+A^{*}\right).\label{eq13}
\end{equation}

\begin{lem}\label{lem5}
Let $A$ be an invertible operator. Then $\operatorname{Re} A \geqslant 0$ if and only if $\operatorname{Re} A^{-1} \geqslant 0.$
\end{lem}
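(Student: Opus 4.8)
The plan is to mimic the proof of Lemma \ref{lem4}: exhibit $\operatorname{Re} A^{-1}$ as a congruence transform of $\operatorname{Re} A$, and then invoke the fact (noted at the beginning of this section) that $X^{*} B X \geqslant 0$ whenever $B \geqslant 0$.

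First I would record the algebraic identity obtained by factoring $A^{-1}$ out on the left and $\left(A^{*}\right)^{-1}$ out on the right:
\begin{equation*}
A^{-1}+\left(A^{*}\right)^{-1}=A^{-1}\left[\left(A^{*}\right)+A\right]\left(A^{*}\right)^{-1}.
\end{equation*}
Since $\left(A^{-1}\right)^{*}=\left(A^{*}\right)^{-1}$, dividing by $2$ this reads
\begin{equation*}
\operatorname{Re} A^{-1}=A^{-1}\,(\operatorname{Re} A)\,\left(A^{-1}\right)^{*}.
\end{equation*}
Taking $X=\left(A^{-1}\right)^{*}$, so that $X^{*}=A^{-1}$, the right-hand side is exactly $X^{*}(\operatorname{Re} A)X$; hence $\operatorname{Re} A \geqslant 0$ implies $\operatorname{Re} A^{-1}\geqslant 0$.

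For the converse I would simply apply what has just been shown to the invertible operator $A^{-1}$ in place of $A$, using $\left(A^{-1}\right)^{-1}=A$; this yields the reverse implication and completes the equivalence. I do not expect any real obstacle here: the only point requiring a little care is the adjoint bookkeeping, namely keeping track of which factor is $A^{-1}$ and which is $\left(A^{-1}\right)^{*}$, so that the expression for $\operatorname{Re} A^{-1}$ is genuinely of the congruence form $X^{*}(\operatorname{Re} A)X$ and not merely similar to $\operatorname{Re} A$.
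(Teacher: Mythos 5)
Your proof is correct and is essentially the paper's own argument: the identity $A^{-1}\bigl(\tfrac{A+A^{*}}{2}\bigr)A^{*-1}=\tfrac{A^{-1}+A^{*-1}}{2}$ exhibits $\operatorname{Re}A^{-1}$ as a congruence of $\operatorname{Re}A$, which the paper uses in exactly the same way. Your only addition is spelling out the converse by swapping $A$ and $A^{-1}$, which the paper leaves implicit in its "if and only if."
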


\begin{proof}
Note that
$A^{-1}\left(\frac{A+A^{*}}{2}\right) A^{*-1}=\frac{A^{*-1}+A^{-1}}{2} .$
So,
$\frac{1}{2}\left(A+A^*\right)$ is positive if and only if $\frac{1}{2}\left(A^{-1}+A^{*-1}\right)$ is positive.
\end{proof}

Now let $R_{n}(A)$ be the matrix defined by \eqref{eq12}.
Apply Lemma \ref{lem5} to the operator $X=\underline{I}-R_{n}(A)$ in place of $A.$ Note that $2 \operatorname{Re} X=\Delta_{n}(-A)$ and $2 \operatorname{Re} X^{-1}=\Gamma_{n}(A),$ where $\Delta_{n}(A)$ and $\Gamma_{n}(A)$ are defined by \eqref{eq5} and \eqref{eq6}. So, these two block matrices are positive together.
Now, if $D$ is the block-diagonal matrix

$$D=\operatorname{diag}\left(-I, I,-I, \ldots,(-1)^{n+1} I\right),$$
then $D \Delta_{n}(A) D=\Delta_{n}(-A).$ So, the block matrices $\Delta_{n}(A)$ and $\Gamma_{n}(A)$ are positive together. This proves the equivalence of conditions (ii) and (iii) in Theorem \ref{thm2}.

We will now prove the implication (i) $\Leftrightarrow$ (ii) of Theorem \ref{thm2}.
For this we need to invoke a matrix version of the Herglotz theorem from Fourier series. Let $f$ be a continuous complex function on $[-\pi, \pi]$ with a convergent Fourier series
\begin{equation}
f(t)=\sum_{n=-\infty}^{\infty} c_{n} \e^{\imath  n t}.\label{eq14}
\end{equation}
The Fourier coefficients $c_{n}$ are given by the formula
\begin{equation}
c_{n}=\frac{1}{2 \pi} \int_{-\pi}^{\pi} f(t) \e^{-\imath  n t} d t .\label{eq15}
\end{equation}
The Herglotz theorem says that $f(t) \geqslant 0$ for all $t$ if and only if the sequence $\left\{c_{n}\right\}_{n \in \mathbb{Z}}$ is a {\it positive definite sequence}. This last statement means that for each $N,$ the $N\times N$ Toeplitz matrix $T_{N}(f)$ with entries $t_{rs}=c_{r-s}$ is positive.
Explicitly, $T_N(f)$ is the matrix
$$
T_{N}(f)=\begin{bmatrix}
c_0 & c_{-1} & c_{-2} & \cdots & c_{-(N-1)}\\
c_1 & c_0 & c_{-1} & \cdots & c_{-(n-2)}\\
\vdots & \vdots & \vdots & \vdots\vdots\vdots & \vdots\\
c_{n-1} & c_{n-2} & c_{n-3} & \cdots & c_0\end{bmatrix}.$$
See, e.g., \cite{rbh2}
Section 3.9 or \cite{ru}.
We need an operator version of this theorem
(in one direction), and we supply
a proof of that.

\begin{lem}\label{lem6}
Let $F(t)$ be an operator-valued function given by the Fourier series
\begin{equation}
F(t)=\sum_{n=-\infty}^{\infty} C_{n} \e^{\imath  n t},\label{eq16}
\end{equation}
and let $T_N(F)$ be the $N\times N$ block Toeplitz matrix whose $r,s$ entry is $C_{r-s}.$
If $F(t) \geqslant 0$ for all $t,$ then for every $N$
the matrix $T_{N}(F)$ is positive.
\end{lem}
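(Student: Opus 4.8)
The plan is to reduce the operator-valued statement to the scalar Herglotz theorem by testing against vectors. Fix $N$ and a block vector $\xi = (x_1, \dots, x_N)$ with each $x_j \in \mathcal{H}$; I want to show $\langle \xi, T_N(F)\, \xi\rangle \geqslant 0$. Writing this out in terms of the blocks, $\langle \xi, T_N(F)\,\xi\rangle = \sum_{r,s=1}^N \langle x_r, C_{r-s}\, x_s\rangle$. The idea is to recognise the right-hand side as an average over $t$ of a manifestly nonnegative quantity. Using the Fourier coefficient formula $C_n = \frac{1}{2\pi}\int_{-\pi}^\pi F(t)\,\e^{-\imath n t}\,dt$ (interpreted weakly, i.e. $\langle x, C_n y\rangle = \frac{1}{2\pi}\int_{-\pi}^\pi \langle x, F(t) y\rangle \e^{-\imath n t}\,dt$), I would substitute and interchange the (finite) sum with the integral to get
$$\langle \xi, T_N(F)\,\xi\rangle = \frac{1}{2\pi}\int_{-\pi}^\pi \sum_{r,s=1}^N \e^{-\imath(r-s)t}\,\langle x_r, F(t)\, x_s\rangle\, dt.$$

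The key step is then to observe that the integrand is exactly $\langle v(t), F(t)\, v(t)\rangle$ for the single vector $v(t) = \sum_{s=1}^N \e^{\imath s t}\, x_s \in \mathcal{H}$. Indeed, expanding $\langle v(t), F(t) v(t)\rangle = \sum_{r,s} \overline{\e^{\imath r t}}\, \e^{\imath s t}\, \langle x_r, F(t) x_s\rangle = \sum_{r,s}\e^{-\imath(r-s)t}\langle x_r, F(t)x_s\rangle$, matching the integrand. Since $F(t) \geqslant 0$ for every $t$, each integrand value $\langle v(t), F(t)\, v(t)\rangle$ is nonnegative, hence the integral is nonnegative, hence $\langle \xi, T_N(F)\,\xi\rangle \geqslant 0$. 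As $\xi$ was arbitrary, $T_N(F) \geqslant 0$.

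The main obstacle is purely one of rigor around convergence: justifying that the Fourier series for $F$ may be used through the integral formula for $C_n$ and that the interchange of the finite sum with the integral is legitimate. In the finite-dimensional setting of the paper (where $\mathcal{H} = \mathbb{C}^d$ and $F(t)$ is a continuous matrix-valued function, so each scalar function $t \mapsto \langle x, F(t) y\rangle$ is continuous on $[-\pi,\pi]$) this is immediate: the sum over $r,s$ has only $N^2$ terms, each a continuous function of $t$, so Fubini/linearity of the integral applies with no subtlety, and the coefficient formula \eqref{eq15} holds entrywise by the scalar theory. One need not even invoke convergence of the Fourier series itself — only the formula for the coefficients $C_n$ is used. (If one wants the general Hilbert space version, the same argument goes through once $F$ is assumed, say, weakly continuous and bounded, which suffices for the applications to $\Delta_n(A)$ and $\Gamma_n(A)$ where $F$ is a trigonometric polynomial.) I would therefore present the argument cleanly in the trigonometric-polynomial-or-continuous case and remark that this is all that is needed in the sequel.
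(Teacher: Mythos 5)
Your proposal is correct and follows essentially the same route as the paper: test $T_N(F)$ against a block vector, substitute the coefficient formula \eqref{eq17}, interchange the finite sum with the integral, and recognise the integrand as a nonnegative quantity for each $t$. The only (cosmetic) difference is that you write the integrand directly as $\langle v(t),F(t)v(t)\rangle$ with $v(t)=\sum_s \e^{\imath s t}x_s$, whereas the paper expresses the same quantity as $\bigl\|\sum_r \e^{-\imath r t}F(t)^{1/2}x_r\bigr\|^2$ via the positive square root.
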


\begin{proof}
The coefficients $C_{n}$ are given, as in \eqref{eq15}, by
\begin{equation}
C_{n}=\frac{1}{2 \pi} \int_{-\pi}^{\pi} F(t)\, \e^{-\imath  n t} dt.\label{eq17}
\end{equation}
To show that $T_{N}(F)$ is positive we have to show that $\left\langle\underline{x}, T_{N}(F) \underline{x}\right\rangle \geqslant 0$ for every vector $\underline{x}=\left[\begin{array}{l}x_{1} \\ x_{2} \\ \vdots \\ x_{N}\end{array}\right],$ where $x_{j} \in \mathcal{H}, 1 \leqslant j \leqslant N.$
To see this, note that
$$
\begin{aligned}
\left\langle\underline{x}, T_{N}(F) \underline{x}\right\rangle &=\sum_{r, s}\left\langle x_{r}, C_{r-s} x_{s}\right\rangle \\
&=\frac{1}{2 \pi} \int_{-\pi}^{\pi} \sum_{r, s} \e^{-\imath (r-s) t}\left\langle x_{r}, F(t) x_{s}\right\rangle
\end{aligned}
$$
using \eqref{eq17}.
Let $F(t)^{1 / 2}$ be the unique positive square root of $F(t).$ Then the integrand above can be expressed as
$$
\left\|\sum_{r} \e^{-\imath  r t} F(t)^{1 / 2} x_{r}\right\|^{2}.
$$
This is a nonnegative quantity, and hence so is the integral.
\end{proof}

The link between Toeplitz matrices and the numerical radius is made via the following lemma.

\begin{lem}\label{lem7}
The following conditions are equivalent:
\begin{itemize}
\item[(i)] $w(A) \leq 1$

\item[(ii)] $\operatorname{Re} \e^{\imath  t} A \leqslant I$ for all $t \in \mathbb{R}.$

\item[(iii)] For every $t,$ the operator
\begin{equation}
F(t)=\e^{-\imath  t} A^{*}+2 I+\e^{\imath  t} A\label{eq18}
\end{equation}
is positive.
\end{itemize}
\end{lem}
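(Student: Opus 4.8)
The plan is to prove the chain of equivalences (i) $\Leftrightarrow$ (ii) $\Leftrightarrow$ (iii), each step resting on one elementary observation. Morally the lemma just says that $w(A)$ equals $\sup_t$ of the top of the spectrum of $\operatorname{Re}\e^{\imath t}A$, repackaged as a positivity statement, so the proof should be short.

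For (i) $\Leftrightarrow$ (ii) I would use the scalar fact that a complex number $z$ satisfies $|z|\le 1$ if and only if $\operatorname{Re}(\e^{\imath t}z)\le 1$ for every real $t$ — indeed $\sup_t\operatorname{Re}(\e^{\imath t}z)=|z|$, the supremum being attained by rotating $z$ onto the nonnegative real axis. Applying this with $z=\langle x,Ax\rangle$ for an arbitrary unit vector $x$, and noting from \eqref{eq13} that $\operatorname{Re}(\e^{\imath t}\langle x,Ax\rangle)=\langle x,(\operatorname{Re}\e^{\imath t}A)x\rangle$, the condition $w(A)\le 1$ becomes $\langle x,(\operatorname{Re}\e^{\imath t}A)x\rangle\le\langle x,x\rangle$ for every $x$ and every $t$, which is precisely $\operatorname{Re}\e^{\imath t}A\le I$ for all $t$.

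For (ii) $\Leftrightarrow$ (iii) the key is the algebraic identity
$$F(t)=\e^{-\imath t}A^{*}+2I+\e^{\imath t}A=2\bigl(I+\operatorname{Re}\e^{\imath t}A\bigr),$$
so that $F(t)\ge 0$ for all $t$ is the same as $\operatorname{Re}\e^{\imath t}A\ge -I$ for all $t$. To match this with (ii), I would invoke the substitution $t\mapsto t+\pi$: under it $\e^{\imath t}A$, and hence $\operatorname{Re}\e^{\imath t}A$, changes sign, so the family $\{\operatorname{Re}\e^{\imath t}A\ge -I:t\in\mathbb{R}\}$ coincides with the family $\{\operatorname{Re}\e^{\imath t}A\le I:t\in\mathbb{R}\}$. (Together they even give $-I\le\operatorname{Re}\e^{\imath t}A\le I$ for all $t$, but either one-sided version suffices.) This closes the three equivalences.

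None of these steps is a genuine obstacle; the only point that needs care is the role of the quantifier over $t$. The one-sided bound in (ii) and the positivity in (iii) are automatically upgraded to two-sided control of $\operatorname{Re}\e^{\imath t}A$ precisely because $t$ ranges over all of $\mathbb{R}$, through the $t\mapsto t+\pi$ symmetry, and keeping ``for all $t$'' in view at each stage is what makes the argument airtight.
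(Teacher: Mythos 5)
Your proof is correct and follows essentially the same route as the paper: the scalar fact $\sup_t\operatorname{Re}(\e^{\imath t}z)=|z|$ applied to $z=\langle x,Ax\rangle$ for (i) $\Leftrightarrow$ (ii), and the identity $F(t)=2\bigl(I+\operatorname{Re}\e^{\imath t}A\bigr)$ for (ii) $\Leftrightarrow$ (iii). If anything, you are more explicit than the paper, which dismisses (iii) as ``just a restatement'' of (ii); your observation that the substitution $t\mapsto t+\pi$ is what converts the lower bound $\operatorname{Re}\e^{\imath t}A\ge -I$ into the upper bound of (ii) is exactly the point being glossed over there.
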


\begin{proof}
Let $z$ be any complex number. Then $|z| \leqslant 1$ if and only if $\operatorname{Re}\, \e^{\imath  t} z \leqslant 1$ for all $t.$ Hence $w(A) \leq 1$ if and only if
$$\operatorname{Re}\, \e^{\imath  t}\langle x, A x\rangle \leqslant 1$$
for all $x$ with $\|x\|=1.$
This translates to
$$
\left\langle x,\left(\operatorname{Re}\, \e^{\imath  t} A\right) x\right\rangle \leqslant\langle x, x\rangle
$$
for all $x.$ Thus the conditions (i) and (ii) are equivalent. Since
$$
\operatorname{Re}\, \e^{\imath  t} A=\frac{1}{2}\left(\e^{-\imath  t} A^{*}+\e^{\imath  t} A\right),
$$
the condition (iii) is just a restatement of (ii).
\end{proof}
\vskip.1in
\noindent{\bf Proof of (i) $\Rightarrow$ (ii) in Theorem \ref{thm2}}.
 Let $w(A) \leqslant 1.$ Then by Lemma \ref{lem7} the matrix $F(t)$ defined by \eqref{eq18} is positive for all $t.$ By Lemma \ref{lem6} all the
Toeplitz matrices $T_{N}(F)$ are positive. The special form of $F$ shows that all these Toeplitz matrices are block-tridiagonal and coincide with the family $\Delta_{n}(A)$ given by \eqref{eq5}.

To prove the reverse implication, we need the following lemma.

\begin{lem}\label{lem8}
Let $R_{n}(A)$ be the operator defined by \eqref{eq12}. Then
$$
w(A) \leqslant \frac{n+1}{n} w\left(R_{n}(A)\right).
$$
\end{lem}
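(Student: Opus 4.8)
The plan is to relate the numerical radius of $A$ to that of $R_n(A)$ by exploiting the specific block structure of $R_n(A)$, namely that it shifts one block down, so that in evaluating $\langle \underline{x}, R_n(A)\underline{x}\rangle$ the block $A$ only couples consecutive coordinates. Concretely, for a unit vector $\underline{x} = (x_1, \dots, x_{n+1})^T$ with $x_j \in \mathcal H$, one computes
\begin{equation}
\langle \underline{x}, R_n(A)\underline{x}\rangle = \sum_{j=1}^{n} \langle x_{j+1}, A x_j\rangle. \label{eq:propR}
\end{equation}
I would first establish a normalized version: given any unit vector $u \in \mathcal H$ and any scalar $\lambda$ on the unit circle, the problem is to build a unit vector $\underline{x}$ in the big space so that $\langle \underline{x}, R_n(A)\underline{x}\rangle$ is a definite known multiple of $\langle u, Au\rangle$. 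The natural choice is $x_j = c_j\, \zeta^{j} u$ for suitable nonnegative weights $c_j$ with $\sum c_j^2 = 1$ and a phase $\zeta$ to be chosen; then \eqref{eq:propR} becomes $\zeta \bigl(\sum_{j=1}^n c_j c_{j+1}\bigr)\langle u, Au\rangle$, so $|\langle \underline{x}, R_n(A)\underline{x}\rangle| = \bigl(\sum_{j=1}^n c_j c_{j+1}\bigr)\,|\langle u, Au\rangle|$.

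The heart of the matter is then the combinatorial optimization: maximize $\sum_{j=1}^{n} c_j c_{j+1}$ subject to $\sum_{j=1}^{n+1} c_j^2 = 1$. This is a standard eigenvalue computation — the maximum is the largest eigenvalue of the $(n+1)\times(n+1)$ tridiagonal matrix with zero diagonal and $1/2$ on the off-diagonals, which is $\cos\frac{\pi}{n+2}$, attained at $c_j = \sqrt{\tfrac{2}{n+2}}\,\sin\frac{j\pi}{n+2}$. Hence for every unit $u$,
\begin{equation}
w\bigl(R_n(A)\bigr) \;\geqslant\; \cos\tfrac{\pi}{n+2}\;|\langle u, Au\rangle|,
\end{equation}
and taking the supremum over $u$ gives $w(R_n(A)) \geqslant \cos\frac{\pi}{n+2}\, w(A)$. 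Since $\cos\frac{\pi}{n+2} \geqslant 1 - \frac{\pi^2}{2(n+2)^2}$, this is in fact a stronger bound than the claimed $w(A) \leqslant \frac{n+1}{n} w(R_n(A))$; but to match the paper's statement exactly, it suffices to check the elementary inequality $\cos\frac{\pi}{n+2} \geqslant \frac{n}{n+1}$ for all $n \geqslant 1$ (equivalently $\frac{n}{n+1} \leqslant \cos\frac{\pi}{n+2}$), which can be verified by a short monotonicity/convexity argument on $[0,\pi/3]$.

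The main obstacle I anticipate is pinning down the sharp constant in the quadratic form maximization and making sure the resulting vector genuinely has norm one and lies in the right space — but since the extremal eigenvector of the path-graph adjacency matrix is classical, this is routine rather than genuinely hard. An alternative, if one wants to avoid the exact eigenvalue, is to choose the simple weights $c_j$ proportional to $1$ for all $j$ (giving $\sum c_j c_{j+1} = \frac{n}{n+1}$ after normalization by $\sqrt{n+1}$), which immediately yields $w(R_n(A)) \geqslant \frac{n}{n+1} w(A)$, i.e. exactly the stated inequality, with no spectral computation at all. I would present this second route as the clean proof, remarking that the eigenvector choice improves the constant. Either way, the phase $\zeta$ and the phase coming from $\langle u, Au\rangle$ are absorbed freely because we only care about the modulus, so no positivity of $A$ is needed.
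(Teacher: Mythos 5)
Your proposal is correct, and the route you recommend as the clean proof --- equal weights $c_j=1/\sqrt{n+1}$, i.e.\ the test vector consisting of $n+1$ copies of the same unit vector $x$ --- is exactly the paper's argument, which computes $\langle\underline{x},R_n(A)\underline{x}\rangle=\tfrac{n}{n+1}\langle x,Ax\rangle$ and then takes suprema. Your spectral refinement via the path-graph eigenvector, giving the constant $\cos\tfrac{\pi}{n+2}$, is a correct bonus but is not needed for the stated bound (and note it is only an improvement for $n\geqslant 2$, since $\cos\tfrac{\pi}{3}=\tfrac12=\tfrac{n}{n+1}$ at $n=1$, so your crude estimate $\cos\theta\geqslant 1-\theta^2/2$ alone does not settle that case).
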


\begin{proof}
Let $x$ be any unit vector in $\mathcal{H}.$ Then
$$\underline{x}=\frac{1}{\sqrt{n+1}}\left[\begin{array}{c}x \\ x \\ \vdots \\ x\end{array}\right] \quad(n+1\textrm{ copies of }x)$$
is a unit vector and
$$
\left\langle\underline{x}, R_{n}(A) \underline{x}\right\rangle=\frac{n}{n+1}\langle x, A x\rangle.
$$
So,
$$
\begin{aligned}
w(A) &=\sup_{\|x\|=1}|\langle x, A x\rangle| \leqslant \frac{n+1}{n} \sup_{\|x\|=1}\left\langle\underline{x}, R_{n}(A) \underline{x}\right\rangle \\
& \leqslant \frac{n+1}{n}w\left(R_{n}(A)\right).
\end{aligned}
$$
\end{proof}
\vskip.1in
\noindent{\bf Proof of (ii) $\Rightarrow$ (i) in Theorem \ref{thm2}}:
For each $t \in \mathbb{R}$ let
$D(t)$ be the block-diagonal matrix
$$
D(t)=\operatorname{diag}\left(\e^{\imath  t} I, \e^{2 \imath t} I, \ldots, \e^{(n+1) \imath t} I\right) \text {. }
$$
Let $\Delta_{n}(A)$ be the matrix \eqref{eq5}. Then
$$
D(t) \Delta_{n}(A) D(t)^{*}=\Delta_{n}\left(\e^{\imath  t} A\right) .
$$
The matrix $D(t)$ is unitary. So, if $\Delta_{n}(A)$ is positive, then so is $\Delta_{n}\left(\e^{\imath  t} A\right)$ for every $t.$ In other words, we have the inequality
$$
\e^{-\imath  t} R_{n}(A)^{*}+2 I+\e^{\imath  t} R_{n}(A) \geqslant 0 \text {. }
$$
By Lemma \ref{lem7}, this is equivalent to the condition $w\left(R_{n}(A)\right) \leqslant 1.$ If this is true for all $n,$ then by Lemma \ref{lem8} we must have $w(A) \leqslant 1.$

This completes the proof of Theorem 2 . 

  \section{The power inequality and dilation theorems}

Unlike the norm $\|A\|,$ the
numerical radius is not submultiplicative.
The inequality
$$
w(A B) \leqslant w(A) w(B)
$$
can fail to hold even when $A$ and $B$ commute.
Worse still it can fail to hold even when $A$ and $B$ are powers of the same operator. This is shown by the following example.
Let $A$ be the $4 \times 4$ matrix
$$
A=\left[\begin{array}{llll}
0 & 0 & 0 & 0 \\
1 & 0 & 0 & 0 \\
0 & 1 & 0 & 0 \\
0 & 0 & 1 & 0
\end{array}\right] \text {. }
$$
A calculation shows that $w(A)=\frac{1+\sqrt{5}}{4}$ while $w\left(A^{2}\right)=w\left(A^{3}\right)=\frac{1}{2}.$ So $w(A) w\left(A^{2}\right)<\frac{1}{2}=w\left(A^{3}\right).$

However, we do have
\begin{equation}
w\left(A^{m}\right) \leqslant w(A)^{m}\label{eq19}
\end{equation}
for all operators $A$ and all positive integers $m.$
This is called the {\it power inequality} or {\it Berger's inequality}, (see \cite{bs}). Theorem \ref{thm2} leads to a quick transparent proof of this.
By homogenity, for any $A$ the operator $A/w(A)$ has numerical radius $1.$
So to prove \eqref{eq19} it suffices to show that $w\left(A^{m}\right) \leqslant 1$ whenever $w(A) \leqslant 1.$ This can be proved using the equivalence of conditions
(i) and (iii) in Theorem \ref{thm2}.
Just note that for every pair of integers $m$ and $n,$
the matrix $\Gamma_{n}\left(A^{m}\right)$ is a principal submatrix of the matrix $\Gamma_{r}(A)$ for some $r.$
Every principal submatrix of a positive matrix is positive.

The power inequality has an interesting history. In a famous paper on stability of numerical schemes for solving partial differential equations, Lax and Wendroff \cite{lw} showed that if $A$ is an operator on a $d$-dimensional space $\mathcal{H},$ then the condition $w(A) \leqslant 1$ implies that the operator $A$ is {\it power bounded}; i.e., there exists a constant $M$ such that $\left\|A^{m}\right\| \leq M$ for all $m.$
The constant $M$ found by Lax and Wendroff depends on $d$ and goes to infinity with $d.$
In an effort to extend this result to infinite-dimensional spaces, Halmos conjectured that the power inequality\eqref{eq19} holds for all operators $A$ on a Hilbert space of any dimension, and noted that a consequence of this would be that $w(A) \leq 1$ implies $\left\|A^{m}\right\| \leq 2$ for all $m.$ (See the relation \eqref{eq3}.)
The conjecture of Halmos was proved by Berger \cite{b} and \cite[p.52]{nfbk}.
Multiplicative properties of the numerical radius have been a major theme of research. See, e.g., \cite{hol, wg} and the many references given there.

Theorems \ref{thm1} and \ref{thm2} lead to two famous {\it dilation theorems}.
Let $A$ be an operator on $\mathcal{H}$ with $\|A\| \leq 1.$
The {\it Sz-Nagy dilation theorem} says that there is a Hilbert space $\mathcal{K}$ containing $\mathcal{H}$ and a unitary operator $U$ on $\mathcal{K}$ such that for every positive integer $n$ we have
\begin{equation}
A^{n}=\mathcal{P}_{\mathcal{H}} \left.U^{n}\right|_{\mathcal{H}}.\label{eq20}
\end{equation}
Here $\left.U^{n}\right|_{\mathcal{H}}$ denotes the restriction of the operators $U^{n}$ to $\mathcal{H}$ and $P_{\mathcal{H}}$ the orthogonal projection in $\mathcal{K}$ onto $\mathcal{H}.$
The operator $U$ is called a {\it unitary dilation} of $A.$

The relation \eqref{eq20} may be described in another way.
Let $\mathcal{H}$ and $\mathcal{K}$ be Hilbert spaces.
A linear map $V$ from $\mathcal{H}$ into $\mathcal{K}$
is called an {\it isometry} if $\langle Vx,Vy\rangle=\langle x,y\rangle$
for all $x,y\in\mathcal{H}.$
The adjoint $V^*$ then maps $\mathcal{K}$ onto $\mathcal{H}$ and $V^*V=I_\mathcal{H},$
the identity operator on $\mathcal{H}.$
If $B$ is any operator on $\mathcal{K},$
then the operator $A=V^*BV$ is called a {\it compression} of $B$ onto $\mathcal{H}.$
If we identify $\mathcal{H}$ as a subspace of $\mathcal{K},$
then in the decomposition $\mathcal{K}=\mathcal{H}\oplus\mathcal{H}^\perp,$
$B$ has a matrix representation
$B=\begin{bmatrix}A & *\\
* & *\end{bmatrix}$
in which $A$ is the top left block.
We call $B$ a {\it dilation} of $A.$
The Sz.-Nagy dilation theorem says that if $A$ is an operator on $\mathcal{H}$ such that $\|A\|\le 1,$
then there is a unitary operator $U$ on a Hilbert space $\mathcal{K}$ containing $\mathcal{H},$
such that for every $n,$
the operator $A^n$ is a compression of $U^n$ to $\mathcal{H}.$

The {\it Berger dilation theorem} says that if $A$ is an operator on $\mathcal{H}$ with $w(A) \leqslant 1,$ then there exists a Hilbert space $\mathcal{K}$ containing $\mathcal{H}$ and a unitary operator $U$ on $\mathcal{K}$ such that for every positive integer $n$ we have
\begin{equation}
A^{n}=\left.\mathcal{P}_{\mathcal{H}} 2 U^{n}\right|_{\mathcal{H}}.\label{eq21}
\end{equation}
Such a $U$ is called a unitary 2-dilation of $A.$

The positivity of the matrices \eqref{eq4} and \eqref{eq6} plays a crucial role in the construction of these dilations.
Let us briefly explain the connection.

Let $G$ be a group and let $\mathcal{H}$ be a complex Hilbert space.
Let $T(s)$ be a function on $G$ whose values are operators on $\mathcal{H}.$
Then $T$ is said to be a {\it positive definite function} if for every finitely supported function $u(s)$ from $G$ into $\mathcal{H}$
we have
$$
\sum_{s,t\in G}\langle u(s),T(s^{-1}t)u(t)\rangle \ge 0.$$
In block matrix terms, this condition means
that for any $N$ and for every choice of points $s_1,\ldots, s_N$ in $G$
the $N\times N$ block matrix with $i,j$ entry $T(s_i^{-1}s_j)$ is positive.

A {\it unitary representation} of $G$ is a function $U(s)$ on $G$ whose values are unitary operators on a Hilbert space $\mathcal{K},$
and they obey the rules $U(e)=I$ and $U(st)=U(s)U(t).$
(Here $e$ is the identity of the group $G$).

There is a connection between positive definite functions and unitary representations given by a theorem of Naimark.
If $U(s)$ is a unitary representation of $G$ in the space $\mathcal{K}$
and if $\mathcal{H}$ is a subspace of $\mathcal{K},$
then $T(s)=P_{\mathcal{H}}\left.U(s)\right|_{\mathcal{H}}$ is a positive definite function on $G,$
and $T(e)=I_{\mathcal{H}}$ (the identity operator on $\mathcal{H}$).
Conversely, if $T(s)$ is a positive definite function on $G$
whose values are operators on $\mathcal{H}$ and $T(e)=I_{\mathcal{H}},$
then their exists a unitary representation of $G$ on a Hilbert space $\mathcal{K}$
containing $\mathcal{H}$ as a subspace,
such that $T(s)=P_{\mathcal{H}}\left.U(s)\right|_{\mathcal{H}}.$

Now given an operator $A$ on $\mathcal{H}$
define a doubly-infinite sequence $\underline{A}(n)$ as follows:
\begin{equation}
\underline{A}(n)=\begin{cases}
A^n & \textrm{ if }n\ge 0\\
A^{*{-n}} & \textrm{ if }n<0.
\end{cases}\label{eq21a}
\end{equation}
Then the positivity of all matrices \eqref{eq4} can be translated to the statement that $\underline{A}(n)$ is a positive definite function
on the group $\mathbb{Z}.$
By Naimark's theorem there exists a unitary representation $U(n)$
of $\mathbb{Z}$ in a Hilbert space $\mathcal{K}$ containing $\mathcal{H}$
such that $\underline{A}(n)=P_{\mathcal{H}}\left.U(n)\right|_{\mathcal{H}}.$
But if $U(1)=U,$ then $U(n)=U^n$ (by the definition of a unitary representation).
This establishes the Sz.-Nagy dilation theorem \eqref{eq20}.
Similarly, the Berger dilation theorem \eqref{eq21}
can be derived from the positivity of the matrices \eqref{eq6}.
See Chapter 1 of the classic \cite{nfbk} for more details.

A typical application of the Sz.-Nagy dilation theorem is a simple and transparent proof of the {\it von Neumann inequality} -
a prominent result in operator theory.
This says that if $p$ is a polynomial and $A$ an operator with $\|A\|\le 1,$
then
\begin{equation}
\|p(A)\|\le\sup\limits_{|z|\le 1}|p(z)|.\label{eqb1}
\end{equation}
Let $\sigma(A)$ denote the spectrum of $A.$
If $U$ is a unitary operator,
then $p(U)$ is a normal operator and 
$$\sigma\left(p(U)\right)=p\left(\sigma(U)\right)\subseteq \{p(z):|z|=1\}.$$
Hence $\|p(U)\|\le \sup\limits_{|z|\le 1}|p(z)|.$
If $A$ is any operator with $\|A\|\le 1,$
then by the Sz.-Nagy dilation theorem $p(A)$ is a compression of $p(U)$ for some unitary operator $U.$
So the inequality \eqref{eqb1} holds.

If $A$ has a unitary dilation $U,$
then $\|A\|\le \|U\|=1.$
On the other hand, if $A$ has a unitary $2$-dilation as in \eqref{eq21},
then it is not obvious that $w(A)\le 1.$
This is, in fact, true.
We outline a proof using block matrices.

Suppose $A^n=2V^*U^nV,$ $n=1,2,\ldots,$
for some isometry $V$ and unitary $U.$
Let $E=\begin{bmatrix}E_{ij}\end{bmatrix}$ be the $(n+1)\times (n+1)$ block matrix with all its blocks $E_{ij}=I.$
Then $E=\frac{1}{n+1}E^2,$ and hence $E$ is positive.
Let $D$ be the block-diagonal matrix $D=\diag\left(I, U, U^2,\ldots,U^{n}\right).$
Then $2DED^*$ is also positive.
Note that
$$2DED^*=2\begin{bmatrix}I & U^* & \cdots &U^{*n}\\
U & I & \cdots & U^{*n}\\
\vdots & \vdots & \vdots\vdots\vdots & \vdots\\
U^{n}& U^{n-1} & \cdots & I\end{bmatrix}.$$
Now if $\underline{V}=\diag\left(V,V,\ldots,V\right),$
then $2\underline{V}^*\left(DED^*\right)\underline{V}$ is the matrix $\Gamma_n(A)$ in \eqref{eq6}.
Our argument shows this is positive for all $n,$ and hence $w(A)\le 1$ by Theorem \ref{thm2}.

\section{Proof of Theorem \ref{thm3}}

Lemma \ref{lem7} characterises operators with $w(A) \leq 1$ in terms of positivity of the expression \eqref{eq18}.
A part of our proof of Theorem \ref{thm2} depended on the Herglotz theorem that gives conditions for such positivity in terms of certain Toeplitz matrices.
For our proof of Theorem \ref{thm3} we need a different characterisation of positivity of expressions like \eqref{eq18}.

A {\it Laurent polynomial} is an expression of the form
$$
q(z)=\sum_{m=-n}^{n} a_{m} z^{m} .
$$
The classical Fejer-Riesz theorem from complex analysis says that if $q$ is a Laurent polynomial which takes positive values on the unit circle; i.e.,
$$
q\left(\e^{\imath  t}\right) \geqslant 0 \quad \text { for all } t \text {, }
$$
then there exists an ordinary polynomial
$$
p(z)=\sum_{m=0}^{n} b_{m} z^{m},
$$
such that
$$
q\left(\e^{\imath  t}\right)=\left|p\left(\e^{\imath  t}\right)\right|^{2} \quad \text { for all } t .
$$
See, e.g. \cite[p.20]{gs} and \cite[p.26]{simon}. There is an operator
version of this due to M. Rosenblum in which the complex coefficients $a_{m}$ and $b_{m}$ are replaced by operators.

\begin{thm}\label{thm9}
Let
\begin{equation}
Q(z)=\sum_{m=-n}^{n} A_{m} z^{m},\label{eq22}
\end{equation}
where $A_{m}$ are operators on $\mathcal{H}.$ Suppose
\begin{equation}
Q\left(\e^{\imath  t}\right) \geqslant 0 \quad \text { for all } t \in \mathbb{R}.\label{eq23}
\end{equation}
Then there exists a polynomial
\begin{equation}
P(z)=\sum_{m=0}^{n} B_{m} z^{m} ,\label{eq24}
\end{equation}
such that
$$
Q\left(\e^{\imath  t}\right)=P\left(\e^{\imath  t}\right)^{*} P\left(\e^{\imath  t}\right) \text { for all } t.
$$
\end{thm}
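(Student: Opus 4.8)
The plan is to recast Theorem~\ref{thm9} as a spectral factorisation statement about a positive, banded, block Toeplitz operator: first reduce, by a perturbation argument, to the strictly positive case, and then there produce the polynomial $P$ by a block Cholesky factorisation that respects the Toeplitz band structure. This parallels one classical route to the scalar result, which passes from positivity on the unit circle to positivity of Toeplitz forms and their triangular factorisation.

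First I would dispose of the routine reductions. Since $Q(\e^{\imath t})$ is self-adjoint for every $t$, the coefficients obey $A_{-m}=A_{m}^{*}$, so by Lemma~\ref{lem6} (applied with $F(t)=Q(\e^{\imath t})$) the block Toeplitz matrices built from the $A_{m}$ are all positive; equivalently, the semi-infinite block Toeplitz operator $\mathcal{T}$ associated with $Q$ is a positive operator, and it is banded of bandwidth $n$. The point is that producing $P(z)=\sum_{m=0}^{n}B_{m}z^{m}$ with $Q=P^{*}P$ is the same as factoring $\mathcal{T}=\mathcal{L}^{*}\mathcal{L}$ with $\mathcal{L}$ an \emph{upper triangular, banded, Toeplitz} matrix whose diagonals, starting from the main one, are $B_{0},B_{1},\dots,B_{n}$ (and $0$ afterwards): the coefficient of $\e^{\imath kt}$ in $P(\e^{\imath t})^{*}P(\e^{\imath t})$ equals $\sum_{m}B_{m}^{*}B_{m+k}$, which is exactly the corresponding entry of $\mathcal{L}^{*}\mathcal{L}$. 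So the theorem asks for a triangular, "analytic", Toeplitz factorisation of the positive operator $\mathcal{T}$.

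Next I would treat the strictly positive case $Q(\e^{\imath t})\ge\delta I>0$, so that $\mathcal{T}\ge\delta I$ is invertible. Running the block Cholesky algorithm produces $\mathcal{T}=\mathcal{L}^{*}\mathcal{L}$ with $\mathcal{L}$ upper triangular; because $\mathcal{T}$ is banded of bandwidth $n$, so is $\mathcal{L}$, by an easy induction on the rows (Cholesky of a banded positive-definite operator creates no entries outside the band). Because $\mathcal{T}$ is moreover Toeplitz, the Schur complements generated along the way form a monotone bounded sequence of positive operators, hence converge; consequently the band diagonals of $\mathcal{L}$ stabilise to operators $B_{0},\dots,B_{n}$, and one checks that the genuinely Toeplitz matrix $T_{P}$ built from $P(z)=\sum_{m=0}^{n}B_{m}z^{m}$ still satisfies $T_{P}^{*}T_{P}=\mathcal{T}$ (both sides are Toeplitz, so it is enough to match one "bulk" diagonal, where $\mathcal{L}$ is already indistinguishable from $T_{P}$). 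Hence $P^{*}P=Q$. An alternative here is to apply the operator Beurling--Lax theorem to the shift-invariant subspace $\overline{Q^{1/2}H^{2}_{\mathcal{H}}}\subseteq L^{2}_{\mathcal{H}}$, obtaining an outer $H^{\infty}$ factor of $Q$; but then one still needs the bandedness of $\mathcal{T}$ to see that this factor is a polynomial of degree $\le n$, which is why I would prefer the Cholesky argument, where the degree bound comes for free.

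Finally, for an arbitrary $Q\ge 0$ I would apply the previous step to $Q_{\varepsilon}=Q+\varepsilon I>0$, obtaining polynomials $P_{\varepsilon}$ of degree $\le n$ with $P_{\varepsilon}^{*}P_{\varepsilon}=Q_{\varepsilon}$; since $\|P_{\varepsilon}(\e^{\imath t})\|^{2}\le\sup_{t}\|Q(\e^{\imath t})\|+\varepsilon$ uniformly, the coefficients $B_{m}^{(\varepsilon)}$ stay bounded, and I would let $\varepsilon\to 0$. When $\mathcal{H}=\mathbb{C}^{d}$ this is immediate: pass to a norm-convergent subsequence of coefficients and use that $P_{\varepsilon}^{*}P_{\varepsilon}\to P^{*}P$. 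In a general Hilbert space this last limit is the main obstacle, since weak-$*$ limits do not commute with the products $B_{m}^{*}B_{l}$ occurring in $P^{*}P$; it is handled by taking each $P_{\varepsilon}$ to be the outer factor and invoking a rigidity (normal families) argument, or by carrying out the limit directly on the Cholesky factors of the $T_{Q_{\varepsilon}}$. The only genuinely non-elementary ingredient is thus the convergence of the Schur-complement (Riccati-type) iteration in the strictly positive case. For the application to Theorem~\ref{thm3} only $n=1$ is needed, and there the whole argument can be carried out explicitly by hand.
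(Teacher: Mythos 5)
First, a point of comparison: the paper does not prove Theorem~\ref{thm9} at all. It is quoted as the ``more advanced tool'' and attributed to Rosenblum, with references to \cite{ros,rosrov,dr} (the Wiener--Masani matrix factorisation theorem is the same circle of ideas); so there is no in-paper argument to measure you against, and you are in effect attempting to reprove a cited result. Your chosen route --- positivity of the finite block Toeplitz sections via Lemma~\ref{lem6}, then a banded Cholesky/Schur-complement factorisation of the semi-infinite block Toeplitz operator, with the Toeplitz shift forcing the rows to stabilise --- is a genuine known strategy (it is essentially Dritschel's Schur-complement proof, and the prediction-theoretic argument of Wiener--Masani), so the plan is sound in outline. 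But as written it has real gaps. (i) Monotonicity of the Schur complements is asserted, not proved: it requires the variational characterisation of the Schur complement together with the shift-invariance of the Toeplitz form, and, more importantly, convergence of the \emph{diagonal} Schur complements does not by itself give convergence of the off-diagonal band entries of $\mathcal{L}$; you need to show that the full $(n+1)\times(n+1)$ Schur complement block (of the leading $k\times k$ corner inside the leading $(k+n+1)\times(k+n+1)$ corner) decreases and converges, and express row $k$ of $\mathcal{L}$ through it, using the uniform lower bound $\delta$ to control the inverse square roots. (ii) ``Both sides are Toeplitz, so match one bulk diagonal where $\mathcal{L}$ is already indistinguishable from $T_P$'' is not an argument: the entries of $\mathcal{L}$ never equal those of $T_P$ at any finite index, so you must pass to the limit in the exact identities $(\mathcal{L}^*\mathcal{L})_{jk}=A_{j-k}$ along a diagonal; this does work (strong convergence of each factor applied to a vector gives convergence of the finitely many inner products $\langle \mathcal{L}_{mj}x,\mathcal{L}_{mk}y\rangle$), but it is exactly the kind of adjoint/product subtlety you flag later and it is not done. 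A smaller but real bookkeeping issue: with the orientation you chose, the stabilised diagonals naturally produce a factorisation of the form $Q=PP^{*}$ rather than $Q=P^{*}P$; this is fixable by reflecting $t\mapsto -t$ (equivalently running the recursion in the other triangular direction), but it must be said.

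(iii) The most serious gap is the one you concede: the passage from $Q+\varepsilon I$ to $Q$ on a general Hilbert space. Your finite-dimensional compactness argument is fine (and suffices for the paper's use of the theorem, where moreover only $n=1$ is needed), but for the theorem as stated the limit $\varepsilon\to 0$ is precisely where the operator Fej\'er--Riesz theorem is hard: bounded nets of coefficients only have weak-$*$ cluster points, and $P_{\varepsilon}^{*}P_{\varepsilon}$ does not pass to such limits. ``Take the outer factor and invoke a rigidity/normal families argument'' or ``carry out the limit on the Cholesky factors'' are gestures, not proofs; the standard resolutions (Lowdenslager's method as in Rosenblum's proof, or Dritschel's argument, which runs the Schur-complement scheme directly on the possibly singular $Q$ using shorted operators and the Douglas factorisation lemma in place of inverses) each require a substantive additional idea that your sketch does not supply. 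So: correct strategy, workable in finite dimensions modulo the repairs in (i)--(ii), but incomplete as a proof of Theorem~\ref{thm9} in the generality stated.
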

(See \cite{ros, rosrov,dr}).

Now let $w(A) \leqslant 1$ and consider the Laurent polynomial
$$
Q(z)=A^{*} z^{-1}+2 I+A z.
$$
Then by Lemma \ref{lem7}, $Q\left(\e^{\imath  t}\right) \geqslant 0$ for all $t.$
So, by Theorem \ref{thm9}, there exists a degree-one polynomial $P(z)=X+Y z$ such that
$$
Q\left(\e^{\imath  t}\right)=P\left(\e^{\imath  t}\right)^{*} P\left(\e^{\imath  t}\right).
$$
In other words
$$
\begin{aligned}
\e^{-\imath  t} A^{*}+2 I+\e^{\imath  t} A &=\left(X+\e^{\imath  t} Y\right)^{*}\left(X+\e^{\imath  t} Y\right) \\
&=X^{*} X+Y^{*} Y+\e^{-\imath  t} Y^{*} X+\e^{\imath  t} X^{*} Y.
\end{aligned}
$$
Since this is true for all $t,$ we must have
\begin{equation}
X^{*} X+Y^{*} Y=2 I \text { and } X^{*} Y=A .\label{eq25}
\end{equation}
Change $X$ and $Y$ to $\sqrt{2} X$ and $\sqrt{2} Y,$ respectively.
This shows the implication (i) $\Rightarrow$ (ii) of Theorem \ref{thm3}.

It is much easier to establish the reverse implication. Given any $X$ and $Y$ we have $(X-Y)^{*}(X-Y) \geqslant 0.$ It follows that
$$
2 \operatorname{Re} X^{*} Y \leqslant X^{*} X+Y^{*} Y.
$$
So, if $X, Y$ and $A$ obey the conditions
\eqref{eq7}, then $\operatorname{Re}A \leqslant I.$
Changing $Y$ to $\e^{\imath  t} Y$ we see that \eqref{eq7} implies
$\operatorname{Re}\, \e^{\imath  t} A \leqslant I$ for all $t.$
Hence, by Lemma \ref{lem7} , $w(A) \leqslant 1.$

We now prove the implication (ii) $\Rightarrow$ (iii) restricting ourselves to the case when $\mathcal{H}$ is finite-dimensional. (The theorem is true for infinite-dimensional spaces as well but the proof needs more elaborate arguments.) Let $X$ and $Y$ be as in \eqref{eq7}.
Then $X^{*} X=I-Y^{*} Y.$ So $\left(X^{*} X\right)^{1 / 2}=\left(I-Y^{*} Y\right)^{1/2},$ and $X$ has a polar decomposition $X=U\left(I-Y^{*} Y\right)^{1 / 2}$ where $U$ is a unitary operator.
We can then write
$$
A=2 X^{*} Y=2\left(I-Y^{*} Y\right)^{1 / 2} U^{*} Y.
$$
If we put $C=U^{*} Y,$ then $C^{*} C=Y^{*} Y,$ and $A=2\left(I-C^{*} C\right)^{1 / 2} C,$ which is the assertion \eqref{eq8}.

Conversely, assume \eqref{eq8}.
Put $X=\left(I-C^{*} C\right)^{1 / 2}$ and $Y=C.$ Then $X^{*} X+Y^{*} Y=I.$ So (iii) $\Rightarrow$ (ii).

(When $\mathcal{H}$ is infinite-dimensional, the polar decomposition has a partial isometry $U$ instead of a unitary. The argument we have given here then needs modifications.).

If $X, Y$ are any two operators, then\useshortskip
\begin{equation}
\left[\begin{array}{ll}
Y^{*} Y & Y^{*} X \\
X^{*} Y & X^{*} X
\end{array}\right]=\left[\begin{array}{ll}
Y^{*} & 0 \\
X^{*} & 0
\end{array}\right]\left[\begin{array}{ll}
Y & X \\
0 & 0
\end{array}\right] \geqslant 0 .\label{eq26}
\end{equation}
Assume $X, Y, A$ satisfy the relations \eqref{eq25}.
If we put $Y^{*} Y=I+H,$ then $X^{*} X=I-H$ and
the inequality \eqref{eq26} translates to
$$
\left[\begin{array}{cc}
I+H & A^{*} \\
A & I-H
\end{array}\right] \geqslant 0
$$
This proves the implication (ii) $\Rightarrow$ (iv). To complete
the proof we show that the condition (iv) of Theorem \ref{thm3} implies the condition (ii) of Theorem \ref{thm2}.
We choose a proof that shows the block matrix technique at its best.

Assume the condition (iv) of Theorem \ref{thm3}.
Note that if the matrix in \eqref{eq9} is positive,
then so are its diagonal entries $I \pm H.$
Let $\Delta_{n}(A)$ be the tridiagonal matrix in \eqref{eq5}.
We split it into a sum in which each summand has just one $2\times 2$ principal submatrix which is nonzero. That is:
\begin{align*}
\Delta_n(A) &=\begin{bmatrix}
2I & A^* & 0 & \cdots & 0\\
A & I-H & 0 & \cdots & 0\\
0 & 0 & 0 & \cdots 0\\
\vdots & \vdots & \vdots & \vdots\vdots\vdots & \vdots\\
0 & 0 & 0 & \cdots 0\end{bmatrix}\nonumber\\
& + \begin{bmatrix}
0 & 0 & 0 & 0 & \cdots & 0\\
0 & I+H & A^* & \cdots & 0\\
0 & A & I-H & \cdots & 0\\
\vdots & \vdots & \vdots & \vdots\vdots\vdots & \vdots\\
0 & 0 & 0 & \cdots & 0\end{bmatrix}
\end{align*}
\begin{align}
\ \ & +\begin{bmatrix}
0 & 0 & 0 & 0 & \cdots & 0\\
0 & 0 & 0 & 0 & \cdots & 0\\
0 & 0 & I+H & A^* & \cdots & 0\\
0 & 0 & A & I-H & \cdots & 0\\
\vdots & \vdots & \vdots & \vdots & \vdots\vdots\vdots & \vdots\\
0 & 0 & 0 & 0 & \cdots & 0\end{bmatrix}\nonumber\\
& +\cdots +\begin{bmatrix}
0 & 0 & \cdots 0 & 0\\
\vdots & \vdots & \vdots\vdots\vdots & \vdots & \vdots\\
0 & 0 & \cdots & 0 & 0\\
0 & 0 & \cdots & I+H & A^*\\
0 & 0 & \cdots & A & 2I\end{bmatrix}.\label{eq27}
\end{align}
Leave aside for a moment the first and the last summands in \eqref{eq27}.
Since $\begin{bmatrix} I+H & A^*\\
A & I-H\end{bmatrix}$ is positive,
all other summands are positive.
The top two $2\times 2$ block of the first summand in \eqref{eq27}
is 
$$
\begin{bmatrix}2I & A^*\\
A & I-H\end{bmatrix}=\begin{bmatrix}I+H & A^*\\
A & I-H\end{bmatrix}+\begin{bmatrix} I-H & 0\\
0 & 0\end{bmatrix},
$$
and being the sum of two positive matrices is positive.
The bottom $2\times 2$ block of the last summand in \eqref{eq27} is
$$
\begin{bmatrix}I+H & A^*\\
A & 2I\end{bmatrix}=\begin{bmatrix} I+H & A^*\\
A & I-H\end{bmatrix}+\begin{bmatrix}0 & 0\\
0 & I+H\end{bmatrix},
$$
and by the same argument this too is positive.
Thus $\Delta_n(A)$ being a sum of positive matrices is positive.
The proof of Theorem \ref{thm3} is complete.
\qed
\vskip.1in
The equivalence of (i), (iii) and (iv) of Theorem \ref{thm3}
was shown by Ando \cite{a}.
He first establishes (i)$\Rightarrow$ (iv) and then
(iv) $\Rightarrow$ (iii).
The arguments in this paper are based on dilation theory.
In a later report \cite{a1} Ando shows (i) $\Leftrightarrow$ (iv) using Arveson's extension theorem
from the theory of completely positive maps.
This proof is reproduced in \cite{rbh1} Theorem 3.5.1.
The proof we have given here is different from either of the two approaches,
and from some others such as \cite{bunce} and \cite{mathias}.

With our emphasis on block matrices we give a reformulation of one of the conditions in Theorem \ref{thm3} as follows.

\begin{prop}\label{propb1}
Let $A$ be an operator on $\mathcal{H}.$
Then the following two conditions are equivalent:
\begin{itemize}
\item[(i)] $w(A)\leqslant 1.$

\item[(ii)] There is an isometry $V$ from $\mathcal{H}$ into $\mathcal{H}\oplus\mathcal{H}$ such that
\begin{equation}
A=V^*\begin{bmatrix}0 & 2I\\
0 & 0\end{bmatrix}V.\label{eqb2}
\end{equation}
\end{itemize}
\end{prop}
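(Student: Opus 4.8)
The plan is to read condition (ii) as a block-matrix reformulation of condition (ii) of Theorem~\ref{thm3}, that is, of the existence of operators $X,Y$ on $\mathcal{H}$ with $X^{*}X+Y^{*}Y=I$ and $A=2X^{*}Y$. The key observation is that a bounded linear map $V$ from $\mathcal{H}$ into $\mathcal{H}\oplus\mathcal{H}$ is exactly the same datum as a column $V=\begin{bmatrix}X\\ Y\end{bmatrix}$ of operators $X,Y$ on $\mathcal{H}$ (with $X$ and $Y$ the two coordinate components of $V$), and under this identification $V^{*}V=X^{*}X+Y^{*}Y$; hence $V$ is an isometry precisely when $X^{*}X+Y^{*}Y=I$. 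Conceptually, the proposition then says that the operators with $w(A)\leqslant 1$ are exactly the isometric compressions of the single operator $\begin{bmatrix}0 & 2I\\ 0 & 0\end{bmatrix}$.

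First I would treat (i) $\Rightarrow$ (ii). Assuming $w(A)\leqslant 1$, Theorem~\ref{thm3} provides operators $X,Y$ on $\mathcal{H}$ with $X^{*}X+Y^{*}Y=I$ and $A=2X^{*}Y$. Set $V=\begin{bmatrix}X\\ Y\end{bmatrix}$, viewed as a map from $\mathcal{H}$ into $\mathcal{H}\oplus\mathcal{H}$. Then $V^{*}V=X^{*}X+Y^{*}Y=I$, so $V$ is an isometry, and
$$V^{*}\begin{bmatrix}0 & 2I\\ 0 & 0\end{bmatrix}V=\begin{bmatrix}X^{*} & Y^{*}\end{bmatrix}\begin{bmatrix}2Y\\ 0\end{bmatrix}=2X^{*}Y=A,$$
which is exactly \eqref{eqb2}.

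For the converse (ii) $\Rightarrow$ (i), let $V$ be an isometry from $\mathcal{H}$ into $\mathcal{H}\oplus\mathcal{H}$ satisfying \eqref{eqb2}, and decompose it as $V=\begin{bmatrix}X\\ Y\end{bmatrix}$ with $X,Y$ operators on $\mathcal{H}$. The isometry condition $V^{*}V=I$ becomes $X^{*}X+Y^{*}Y=I$, and the same block computation as above turns \eqref{eqb2} into $A=2X^{*}Y$. Thus $X$ and $Y$ witness condition (ii) of Theorem~\ref{thm3}, and therefore $w(A)\leqslant 1$.

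Since both directions reduce to a single line once the block decomposition of $V$ is in place, there is no substantial obstacle; the only point requiring care is the bookkeeping that puts $X$ in the first slot and $Y$ in the second, so that the fixed off-diagonal block $\begin{bmatrix}0 & 2I\\ 0 & 0\end{bmatrix}$ produces $2X^{*}Y$ and not $2Y^{*}X$ --- which is immaterial in any case, as $w(A)\leqslant 1$ if and only if $w(A^{*})\leqslant 1$.
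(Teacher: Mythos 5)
Your proposal is correct, and the forward direction (i) $\Rightarrow$ (ii) is exactly the paper's argument: take $X,Y$ from condition (ii) of Theorem~\ref{thm3}, stack them into $V=\begin{bmatrix}X\\ Y\end{bmatrix}$, and verify $V^*V=I$ and $V^*\begin{bmatrix}0 & 2I\\ 0 & 0\end{bmatrix}V=2X^*Y=A$. The only divergence is in the converse: you unpack $V$ into its components and feed $X^*X+Y^*Y=I$, $A=2X^*Y$ back into the implication (ii) $\Rightarrow$ (i) of Theorem~\ref{thm3} (which the paper proves via $(X-\e^{\imath t}Y)^*(X-\e^{\imath t}Y)\geqslant 0$ and Lemma~\ref{lem7}), whereas the paper argues directly that $w\left(\begin{bmatrix}0 & 2I\\ 0 & 0\end{bmatrix}\right)=1$ and that $w(V^*XV)\leqslant w(X)$ for any isometry $V$, so that any isometric compression of this model operator has numerical radius at most $1$. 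Both routes are sound; yours stays entirely inside the already-proved equivalences of Theorem~\ref{thm3}, while the paper's makes the "compression of a fixed model operator" picture explicit at the cost of two small facts it leaves to the reader. Your closing remark about $w(A)\leqslant 1\Leftrightarrow w(A^*)\leqslant 1$ is unnecessary (the block computation already yields $2X^*Y$, not $2Y^*X$), but harmless.
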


\begin{proof}
Let $X$ and $Y$ be the operators in \eqref{eq7} and let $V=\begin{bmatrix}X\\Y\end{bmatrix}.$
Then $V^*V=X^*X+Y^*Y=I,$
and $V$ is an isometry.
Further
$$V^*\begin{bmatrix}0 & 2I\\
0 & 0\end{bmatrix}V=2X^*Y=A.$$
Thus (i)$\implies$(ii).

It can be easily seen that $w\left(\begin{bmatrix}0 & 2I\\
0 & 0\end{bmatrix}\right)=1$
and if $V$ is an isometry, then $w(V^*XV)\le w(X)$ for every $X.$
Thus (ii)$\implies$(i).
\end{proof}

\section{Asides and connections}

It is illuminating to consider the case of matrices \eqref{eq5} when $A$ is replaced by a complex number $a$:
$$\Delta_n(a)=\begin{bmatrix}2 & \overline{a} & 0 & \cdots & 0\\
a & 2 & \overline{a} & \cdots & 0\\
0 & a & 2 & \cdots & 0\\
\vdots & \vdots & \vdots & \vdots\vdots\vdots & \vdots\\
0 & 0 & 0 & \cdots & 2\end{bmatrix}.$$
The special case $a=-1$
$$\Delta_n(-1)=\begin{bmatrix}2 & -1 & 0 & \cdots & 0\\
-1 & 2 & -1 & \cdots & 0\\
0 & -1 & 2 & \cdots & 0\\
\vdots & \vdots & \vdots & \vdots\vdots\vdots & \vdots\\
0 & 0 & 0 & \cdots & 2\end{bmatrix}$$
represents the discretised version of the second order differential operator $-\frac{\operatorname{d}^2}{\operatorname{d}x^2}.$
This matrix is studied intensively in numerical analysis,
differential equations, Fourier analysis and several other contexts.
The eigenvalues of $\Delta_n(a)$ turn out to be
$$\lambda_j=1+|a|\cos\frac{j\pi}{n+2},\ \ 1\le j\le n.$$
The condition that all matrices $\Delta_n(a)$ be positive is equivalent to saying that all these numbers be positive for all $n$ and $j.$
This is equivalent to the condition $|a|\le 1.$

In tune with the themes of this paper we present another theorem
linking the condition $w(A)\le 1$ with positivity.
A trigonometricc polynomial is an expression of the form
\begin{equation}
g\left(\e^{\imath t}\right)=\sum_{j=-N}^{N} c_j\e^{\imath jt},\ \ -\pi\le t\le \pi.\label{a1}
\end{equation}
If $g\left(\e^{\imath t}\right)\ge 0$ for all $t,$
then $g$ is said to be {\it positive}.
Given an operator $A$ on $\mathcal{H},$ associate with a trigonometric polynomial $g$ the operator
\begin{equation}
\Phi_A(g)=\sum_{j=-N}^{N}c_j\underline{A}(j)+c_0I,\label{eqa2}
\end{equation}
where $\underline{A}(j)$ is the sequence defined by \eqref{eq21a}.

\begin{thm}\label{thm10}
The following two conditions are equivalent:
\begin{itemize}
\item[(i)] $w(A)\le 1.$
\item[(ii)] For every positive trigonometric polynomial $g,$
the operator $\Phi_A(g)$ defined by \eqref{eqa2}
is positive.
\end{itemize}
\end{thm}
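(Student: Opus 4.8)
The plan is to prove the two implications separately. For (i) $\Rightarrow$ (ii) I would use Theorem \ref{thm2} together with a compression argument; for the harder direction (ii) $\Rightarrow$ (i) I would use a limiting argument built on the Fejér kernel $K_N(s)=\sum_{|j|\le N}\bigl(1-\tfrac{|j|}{N+1}\bigr)\e^{\imath js}$, which is itself a positive trigonometric polynomial. For (i) $\Rightarrow$ (ii): assuming $w(A)\le 1$, Theorem \ref{thm2} gives $\Gamma_N(A)\ge 0$ for every $N$. Given a positive trigonometric polynomial $g$ of degree $N$, the classical Fejér--Riesz theorem lets me write $g(\e^{\imath t})=|p(\e^{\imath t})|^{2}$ with $p(z)=\sum_{k=0}^{N}b_kz^{k}$. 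For $x\in\mathcal H$ I would form the block vector $\underline{x}=(b_0x,\dots,b_Nx)$ and expand $\langle\underline{x},\Gamma_N(A)\underline{x}\rangle$; collecting the terms along the diagonals $r-s=m$ and using that the diagonal blocks of $\Gamma_N(A)$ are $2I$, one finds $\langle\underline{x},\Gamma_N(A)\underline{x}\rangle=\langle x,\Phi_A(h)x\rangle$ for the positive trigonometric polynomial $h(\e^{\imath t})=\bigl|\sum_kb_k\e^{-\imath kt}\bigr|^{2}$ — the one point to check being the bookkeeping that the extra summand $\hat h(0)I$ in the definition of $\Phi_A$ absorbs exactly the contribution of the $2I$ blocks. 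Since Fejér--Riesz produces every positive trigonometric polynomial as such an $h$, positivity of all the $\Gamma_N(A)$ forces $\Phi_A(h)\ge 0$ for every positive $h$.

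For (ii) $\Rightarrow$ (i): the subtle point is that $\Phi_A(g)$ carries the term $2\hat g(0)I$, so degree‑one positive polynomials only give $w(A)\le 2$; one must use arbitrarily high degree, and the relevant limit only converges after a preliminary reduction. I would argue in three steps. \emph{Step 1 (power boundedness).} Applying (ii) to $g=K_N(\,\cdot-t)$ and compressing $\Phi_A(K_N(\,\cdot-t))\ge 0$ against a unit vector $x$ yields a positive trigonometric polynomial in $t$ with mean value $2$ and with $(-j)$‑th Fourier coefficient $\bigl(1-\tfrac{j}{N+1}\bigr)\langle x,A^{j}x\rangle$; since every Fourier coefficient of a positive trigonometric polynomial is dominated in modulus by its mean, $\bigl(1-\tfrac{j}{N+1}\bigr)|\langle x,A^{j}x\rangle|\le 2$, and letting $N\to\infty$ gives $w(A^{j})\le 2$, hence $\|A^{j}\|\le 2w(A^{j})\le 4$ for all $j$ by \eqref{eq3}, so $r(A)\le 1$. \emph{Step 2 ((ii) survives shrinking).} For $\rho\in(0,1)$ one has $\Phi_{\rho A}(g)=\Phi_A(g*P_\rho)$, where $P_\rho$ is the Poisson kernel; $g*P_\rho$ has Fourier coefficients $\rho^{|j|}\hat g(j)$, so it is again a trigonometric polynomial of degree $\le\deg g$, and it is nonnegative as an average of the nonnegative function $g$, whence (ii) holds for $\rho A$. \emph{Step 3 (the limit).} Fix $\rho\in(0,1)$; then $r(\rho A)=\rho\,r(A)<1$, so $\sum_j\|(\rho A)^{j}\|<\infty$, and as $N\to\infty$ the operators $\Phi_{\rho A}(K_N(\,\cdot-t))=2I+\sum_{j=1}^{N}\bigl(1-\tfrac{j}{N+1}\bigr)\bigl(\e^{-\imath jt}(\rho A)^{j}+\e^{\imath jt}(\rho A^{*})^{j}\bigr)$ converge in operator norm to $(I-\e^{-\imath t}\rho A)^{-1}+(I-\e^{\imath t}\rho A^{*})^{-1}$. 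This limit is positive, and by the identity $(I-C)^{-1}+(I-C^{*})^{-1}=(I-C)^{-1}(2I-C-C^{*})(I-C^{*})^{-1}$ — the computation already carried out in the proof of Lemma \ref{lem4} — its positivity is equivalent to $2I-\e^{-\imath t}\rho A-\e^{\imath t}\rho A^{*}\ge 0$, i.e.\ $\operatorname{Re}\,\e^{-\imath t}\rho A\le I$. As $t$ is arbitrary, Lemma \ref{lem7} gives $w(\rho A)\le 1$, and letting $\rho\to 1$ gives $w(A)\le 1$.

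The main obstacle is entirely in (ii) $\Rightarrow$ (i): the inequalities read off directly from (ii) stall at $w(A)\le 2$, and the mechanism that improves ``$2$'' to ``$1$'' is the passage to arbitrarily high degree. The technical price is Step 1 (one must first know $A$ is power bounded so that the Fejér means can be given a limit) together with the Poisson smoothing of Step 2 (so that the limiting operator is the resolvent expression that Lemma \ref{lem4} converts back into a numerical‑radius bound). Everything else is routine harmonic analysis on the circle and the elementary operator identities of Section 2.
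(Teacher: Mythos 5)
Your proof of (i) $\Rightarrow$ (ii) is essentially the paper's: Fej\'er--Riesz factorisation of $g$, a block vector built from the coefficients, and positivity of $\Gamma_N(A)$ from Theorem \ref{thm2}, with the $2I$ diagonal accounting for the extra $c_0I$ in \eqref{eqa2}. For (ii) $\Rightarrow$ (i), however, you take a genuinely different route, and it is correct. The paper also feeds in the translated Fej\'er kernel, but then stays at the operator level: positivity of $\Phi_A\bigl(K_N(\cdot-\theta)\bigr)$ for all $\theta$ is fed into the operator Herglotz lemma (Lemma \ref{lem6}) to get positivity of the Ces\`aro-weighted block Toeplitz matrices \eqref{eqa5}, and then, for each fixed $n$, letting $N\to\infty$ in the top $n\times n$ principal submatrix recovers $\Gamma_n(A)\geqslant 0$, so Theorem \ref{thm2} finishes the argument -- a short, purely block-matrix path in keeping with the paper's theme. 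You instead compress to scalars and run a resolvent argument: the coefficient-versus-mean bound for nonnegative trigonometric polynomials gives $w(A^{j})\leqslant 2$, hence power boundedness and $r(A)\leqslant 1$; the Poisson-kernel identity $\Phi_{\rho A}(g)=\Phi_A(g*P_\rho)$ transfers hypothesis (ii) to $\rho A$; and summing the Neumann series of the Fej\'er means yields $(I-\e^{-\imath t}\rho A)^{-1}+(I-\e^{\imath t}\rho A^{*})^{-1}\geqslant 0$, which the factorisation from Lemma \ref{lem4} converts into $\operatorname{Re}\,\e^{\imath t}\rho A\leqslant I$, i.e.\ $w(\rho A)\leqslant 1$ by Lemma \ref{lem7}, and $\rho\to 1$ concludes. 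What each buys: the paper's version is shorter, needs no spectral-radius or power-boundedness preliminaries, and reuses the machinery (Lemma \ref{lem6}, Theorem \ref{thm2}) already developed; yours avoids the block Toeplitz matrices and the Herglotz lemma in this direction entirely, is self-contained circle harmonic analysis plus the elementary identities of Section 2, and makes explicit why arbitrarily high degree is indispensable (degree-one data alone stalls at $w(A)\leqslant 2$) -- at the price of the extra Steps 1 and 2, which are genuinely needed to justify letting $\rho\to 1$.
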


\begin{proof}
Let $w(A)\le 1,$ and let $g$ be a positive trigonometric polynomial.
By the Fejer-Riesz Theorem cited at the beginning of Section 4,
$g$ must be of the form
$$g\left(\e^{\imath t}\right)=\sum_{j,k=0}^{N}\alpha_j\overline{\alpha_k}\, \e^{\imath (j-k)t}.$$
Then
\begin{equation}
\Phi_A(g)=\sum_{j,k=0}^{N}\alpha_j\overline{\alpha_k}\underline{A}(j-k)+|\alpha_0|^2I.\label{eqa3}
\end{equation}
We have to show this is a positive operator.
This means that for every vector $x$ in $\mathcal{H}$
we must have
$$\langle x,\Phi_A(g)x\rangle \ge 0.$$
Using the expression \eqref{eqa3}
this can be stated as
\begin{equation}
\sum_{j,k=0}^{N}\alpha_j\overline{\alpha_k}\langle x,\underline{A}(j-k)x\rangle +|\alpha_0|^2\|x\|^2\ge 0.\label{eqa4}
\end{equation}
If we put
$$y=\begin{bmatrix}
\alpha_0x\\
\alpha_1x\\
\vdots\\
\alpha_nx\end{bmatrix}$$
and let $\Gamma_n(A)$ be the matrix in \eqref{eq6},
then the condition \eqref{eqa4} can be expressed as
$$\langle y,\Gamma_n(A)y\rangle\ge 0.$$
This is true because $\Gamma_n(A)$ is a positive operator by Theorem \ref{thm2}.

To prove the converse, suppose $\Phi_A$ satisfies condition (ii).
Fix a $\theta$ in $[-\pi,\pi]$
and let
$$p\left(\e^{\imath t}\right)=\frac{1}{\sqrt{n+1}}\sum_{j=0}^{N}\e^{\imath j\theta}\e^{\imath jt}.$$
Then
$$
g\left(\e^{\imath t}\right)=\left|p\left(\e^{\imath t}\right)\right|^2=\frac{1}{n+1}\sum_{j=-N}^{N}\left(N+1-|j|\right)\e^{\imath (\theta+t)}
$$
is a positive trigonometric polynomial.
So, the operator
$$
\Phi_A(g)=\sum_{j=-N}^{N}\frac{N+1-|j|}{N+1}\e^{\imath j\theta}\underline{A}(n)+I$$
is positive.
This is true for all $\theta.$
Hence, by Lemma \ref{lem6},
the matrices
\begin{equation}
\frac{1}{N+1}\begin{bmatrix}2I & NA^* & (N-1)A^{*2} & \cdots & A^{*N}\\
NA & 2I & NA^* & \cdots & 2A^{*(N-1)}\\
\vdots & \vdots & \vdots & \vdots\vdots\vdots & \vdots\\
A^N & 2A^{N-1} & 3A^{N-2} & \cdots & 2I\end{bmatrix},\label{eqa5}
\end{equation}
are all positive.
$($The matrix above is the Hermitian Toeplitz matrix with first row
$\left.\left(2I ,NA^*,(N-1)A^{*2},\ldots,2A^{*(N-1)}, A^{*N}\right).\right).$
For each $n<N,$ consider the top $n\times n$ principal submatrix of \eqref{eqa5}.
Keeping $n$ fixed, let $N\to\infty.$
This shows that the matrices $\Gamma_n(A)$ in \eqref{eq6} are positive.
Hence $w(A)\le 1.$
\end{proof}

The usual proof of Theorem \ref{thm10} (see, e.g. \cite[Theorem 3.15]{paulsen})
invokes "complete positivity" of the map $\Phi_A.$
We have bypassed that argument by using block matrix techniques only.

\vskip.1in
\section*{Conflict of interest}

 The authors declare that they have no conflict of interest.

\end{document}